\def\id{{\rm \textsf{id}}}
\def\Aut{{\rm \textsf{Aut}}}
\begin{document}
\begin{sloppypar}

\newtheorem{theorem}{Theorem}[section]
\newtheorem{problem}[theorem]{Problem}
\newtheorem{corollary}[theorem]{Corollary}
\newtheorem{definition}[theorem]{Definition}
\newtheorem{conjecture}[theorem]{Conjecture}
\newtheorem{question}[theorem]{Question}
\newtheorem{lemma}[theorem]{Lemma}
\newtheorem{proposition}[theorem]{Proposition}
\newtheorem{fact}[theorem]{Fact}
\newtheorem{observation}[theorem]{Observation}
\newtheorem{example}[theorem]{Example}
\newcommand{\remark}{\medskip\par\noindent {\bf Remark.~~}}
\newcommand{\pp}{{\it p.}}
\newcommand{\de}{\em}

\title{  {Generalized Cayley graphs of complete groups}\thanks{This research was supported by  NSFC (No. 12071484).
 E-mail addresses: liaoqianfen@163.com(Q. Liao),  wjliu6210@126.com(W. Liu, corresponding author).}}

\author{Qianfen Liao$^{a}$, Weijun Liu$^{b, c}$\\
{\small a. Department of Mathematics, Guangdong University of Education,} \\
{\small  Guangzhou,  Guangdong, 510303, P.R. China. }\\
{\small b. School of Mathematics and Statistics, Central South University,} \\
{\small  Changsha, Hunan, 410083, P.R. China. }\\
{\small c. College of General Education, Guangdong University of Science and Technology,}\\
{\small Dongguan Guangdong, 523083, P.R. China. }\\
}

\maketitle

\vspace{-0.5cm}

\begin{abstract}
A group $G$ is complete group if it satisfies $Z(G)=e$ and $Aut(G)=Inn(G)$.
In this paper, on the one hand, we study the basic properties of generalized Cayley graphs and characterize two classes isomorphic generalized generalized Cayley graphs of complete groups.
 On the other hand, we give the sufficient and necessary conditions of complete group to be $GCI$ group and restricted $GCI$ group.
As an application, we complete the classification of restricted $GCI$-groups for symmetric groups.
\end{abstract}

{{\bf Key words:}}
generalized Cayley graph; complete group.
\section{Introduction}

The generalized Cayley graph, as a generalization of the Cayley graph, was first proposed in 1992 by D. Maru\v{s}i\v{c} et al. \cite{D.R.N}.
\begin{definition}\label{def1.1}
Given a finite group $G$, for $S\subseteq G$ and $\alpha \in \Aut(G)$, if they satisfy several conditions below:
\begin{enumerate}[{\rm(a)}]

\item $\alpha^2=\mathrm{id}$, where $\mathrm{id}$ is the identity of $\Aut(G)$;

\item for any $g \in G$, $(g^{-1})^\alpha g\notin S$;

\item for $g,h\in G$, if $(h^{-1})^\alpha g\in S$, then $(g^{-1})^\alpha h\in S$.
\end{enumerate}
\end{definition}
Then the graph with vertices $G$ and edges $\{\{g,h\}\mid (g^{-1})^\alpha h \in S\}$ is denoted by $GC(G,S,\alpha)$.
We call $S$ a generalized Cayley subset and $GC(G,S,\alpha)$ a generalized Cayley graph of $G$ with respect to the ordered pair $(S,\alpha)$.

Some basic properties about $GC(G,S,\alpha)$ deserve to be mentioned.
Firstly, $GC(G,S,\alpha)$ is a simple and undirected  graph.
Secondly, for each vertex $g \in G$, the set of vertices that adjacent to $g$ is $N(g)=\{g^\alpha s\mid s\in S\}$, so $GC(G,S,\alpha)$ is $|S|$-regular.
Thirdly, the subsets $\omega_\alpha=\{(g^{-1})^\alpha g\mid g\in G\}$, $\Omega_\alpha=\{g\mid g^\alpha=g^{-1} ~\text{and}~ g\notin \omega_\alpha\}$ and $\mho_\alpha=\{g\mid g^\alpha\neq g^{-1}\}=G\setminus \{\omega_\alpha\cup \Omega_\alpha\}$  construct a partition of $G$.
In addition, for generalized subset $S$, the condition $(b)$ implies that $S\cap \omega_\alpha=\emptyset$ and the condition (c) yields that $\alpha(S)=S^{-1}$.
Hence, if  $s\in S\cap \mho_\alpha$, then $\alpha(s^{-1})\in S$.
At last, the condition (a) above implies that $\alpha$ is $\id$ or an involutory automorphism of $G$.
Particularly, if $\alpha=\mathrm{id}$, $S$ is called a Cayley subset and it gives the Cayley graph.

As is well-known, Cayley graphs are vertex-transitive, but the generalized Cayley graphs may not be.
Maru\v{s}i\v{c} et al. provide specific examples in the article to illustrate this point.
In fact, vertex-transitive generalized Cayley graphs are rare.
Hujdurovi\'{c} et al. \cite{A.K.D} construct a series of non-Cayley vertex-transitive generalized Cayley graphs.
Subsequently, to investigate the isomorphism problem of generalized Cayley graphs, Yang, Liu and Feng \cite{Y2} introduce the definitions of $GCI$ groups and restricted $GCI$ groups.
Then $GCI$ groups and restricted $GCI$ groups for certain special groups have been studied, including cyclic groups, dihedral groups, alternating groups and non-abelian simple groups, see references \cite{L,Y1,Z}.

A group $G$ is complete group if it satisfies $Z(G)=e$ and $Aut(G)=Inn(G)$.
In this paper, we study the basic properties of generalized Cayley graphs of complete group.
As an application, we complete the classification of restricted $GCI$-groups for symmetric groups.

We introduce some notations in group theory.
For a group $G$ and element $g\in G$, let $C_G(g)=\{h\in G\mid hg=gh\}$ be the centralizer of $g$ in $G$, and $Z(G)=\{h\in G\mid gh=hg~\text{for all}~ g\in G\}=\cap_{g\in G}C_G(g)$ the central of $G$.
Given a subset $H$ of $G$, let $N_G(H)=\{g\in G\mid Hg=gH\}$ be the normalizer of $H$ in $G$.
Two elements $g$ and $h$ are conjugate in $G$ if there exists element $x\in G$ such that $g=xhx^{-1}$.
Let $C(g)$ be the conjugate class containing $G$, that is the set of elements that conjugate to $g$.
Then $|C(g)|=|G:C_G(g)|$.
The conjugate relation is an equivalent relation, and $G$ can be divided into some disjoint conjugate classes.
Let $[g, h]=g^{-1}h^{-1}gh$ be the commutator of elements $g$ and $h$.
For each element $g\in G$, let $\sigma(g): h\mapsto h^g=ghg^{-1}$ be the inner automorphism of $G$ induced by $g$.
Then $Inn(G)=\{\sigma(g)\mid g\in G\}$ is an subgroup of $Aut(G)$ and called the inner automorphism group of $G$.
Let  $\alpha$ and $\beta$ be permutations on $G$, for any $g\in G$, define $g^{\beta \alpha}=(g^\alpha)^\beta$.
In the following discussion of this paper, we assume that $G$ is complete group and all generalized Cayley graphs are induced by some involutory automorphism.

\section{Basic properties}

In this section, we study the properties of generalized Cayley graphs of complete graphs.
Firstly, some observations about the involutory automorphism of complete group $G$ are presented.

\begin{observation}
For $g\in G$,  $\sigma(g)$ is involutory automorphisms of $G$ if and only if
$g$ is involution of $G$.
\end{observation}

\begin{observation}\label{ob2.2}
The automorphisms $\sigma(g)$ and $\sigma(h)$ of $G$ are conjugate in $Aut(G)$ if and only if $g$ and $h$ are conjugate in $G$.
\end{observation}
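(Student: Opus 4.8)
The plan is to reduce the statement to the elementary fact that a group isomorphism preserves conjugacy, using the two defining properties of a complete group. First I would record that the assignment $\sigma\colon G\to Aut(G)$, $g\mapsto\sigma(g)$, is a group homomorphism: for $x,g,h\in G$,
\[
\sigma(xg)(h)=(xg)h(xg)^{-1}=x\bigl(ghg^{-1}\bigr)x^{-1}=\sigma(x)\bigl(\sigma(g)(h)\bigr),
\]
so $\sigma(xg)=\sigma(x)\sigma(g)$. Its kernel is $\{g\in G:\sigma(g)=\id\}=Z(G)=e$ since $G$ is complete, so $\sigma$ is injective; its image is $Inn(G)$, which equals $Aut(G)$, again since $G$ is complete. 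Hence $\sigma$ is an isomorphism from $G$ onto $Aut(G)$.

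With this in hand the equivalence follows in both directions. For the ``only if'' part: suppose $\sigma(g)$ and $\sigma(h)$ are conjugate in $Aut(G)$, say $\beta\sigma(g)\beta^{-1}=\sigma(h)$ for some $\beta\in Aut(G)$. Because $Aut(G)=Inn(G)$ we may write $\beta=\sigma(x)$ with $x\in G$, and then $\sigma(xgx^{-1})=\sigma(x)\sigma(g)\sigma(x)^{-1}=\sigma(h)$; injectivity of $\sigma$ forces $xgx^{-1}=h$, so $g$ and $h$ are conjugate in $G$. For the ``if'' part: if $h=xgx^{-1}$ in $G$, then applying the homomorphism $\sigma$ yields $\sigma(h)=\sigma(x)\sigma(g)\sigma(x)^{-1}$, and $\sigma(x)\in Inn(G)\subseteq Aut(G)$, so $\sigma(g)$ and $\sigma(h)$ are conjugate in $Aut(G)$.

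There is no genuinely hard step here; the points that merit care are: (i) fixing the conjugation conventions of the paper consistently ($h^g=ghg^{-1}$, and ``$g$ conjugate to $h$'' meaning $g=xhx^{-1}$) so that $\sigma$ comes out a homomorphism and not an anti-homomorphism; and (ii) being explicit about where completeness enters, namely $Z(G)=e$ for injectivity of $\sigma$ (used in the ``only if'' direction to pass from $\sigma(xgx^{-1})=\sigma(h)$ to $xgx^{-1}=h$) and $Aut(G)=Inn(G)$ to realise the conjugating automorphism $\beta$ as $\sigma(x)$ for some $x\in G$. The ``if'' direction, by contrast, uses neither hypothesis, since $Inn(G)$ is a subgroup of $Aut(G)$ unconditionally. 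I expect the proof in the paper to amount to little more than the two displayed computations above.
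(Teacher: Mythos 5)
Your proposal is correct and follows essentially the same route as the paper: the paper likewise uses $Aut(G)=Inn(G)$ to write the conjugating automorphism as $\sigma(y)$ and then uses $Z(G)=e$ to pass from $\sigma(ygy^{-1})=\sigma(h)$ to $ygy^{-1}=h$ (it just carries out the injectivity of $g\mapsto\sigma(g)$ by an inline centralizer computation rather than stating it as $\ker\sigma=Z(G)$). Your packaging of the argument as ``$\sigma$ is an isomorphism $G\to Aut(G)$'' is a clean, equivalent reorganization.
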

\begin{proof}
If $\sigma(g)$ and $\sigma(h)$ are conjugate in $Aut(G)$, let $\sigma(y)$ be an automorphism of $G$ such that $\sigma(h)=\sigma(y)\sigma(g)(\sigma(y))^{-1}$.
Since $(\sigma(y))^{-1}=\sigma(y^{-1})$, we obtain that for any element $x\in G$,
\begin{equation*}
x^{\sigma(h)}=hxh^{-1}=x^{\sigma(y)\sigma(g)\sigma(y^{-1})}
=ygy^{-1}xyg^{-1}y^{-1}
=(ygy^{-1})x(gyy^{-1})^{-1}.
\end{equation*}
It follows that $h^{-1}(ygy^{-1})\in C_G(x)$.
From the arbitrary of $x$, we have $h^{-1}(ygy^{-1})\in Z(G)$.
Recall that $G$ is complete group, then $Z(G)=e$, which yields that $h=ygy^{-1}$.

Conversely, if $g$ and $h$ are conjugate in $G$, which means there exists element $x$ such that $g=xhx^{-1}$.
It can be verified that $\sigma(g)=\sigma(xhx^{-1})=\sigma(x)\sigma(h)\sigma(x)^{-1}$.
Thus, $\sigma(g)$ and $\sigma(h)$ are conjugate.
\end{proof}

Let $\sigma(g)$ be any involutory automorphism of $G$, then we find the relationship between  subset $\mho_{\sigma(g)}(G)$ and the order of element.
\begin{observation}\label{ob2.3}
$x\in \mho_{\sigma(g)}(G)$ if and only if $gx$ is not involution of $G$.
\end{observation}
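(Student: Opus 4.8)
The plan is to reduce the stated equivalence to a one‑line computation inside $G$, the only real input being that the automorphism $\sigma(g)$ is involutory. By the standing convention of the paper, $GC(G,S,\sigma(g))$ is induced by an involutory automorphism, so by the first observation of this section $g$ itself is an involution of $G$; in particular $g^{-1}=g$ and $g^{2}=e$. This is exactly the fact that makes everything collapse.

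First I would unwind the definition of the set $\mho_{\sigma(g)}(G)$: by definition, $x\in\mho_{\sigma(g)}(G)$ if and only if $x^{\sigma(g)}\neq x^{-1}$. Next, using $x^{\sigma(g)}=gxg^{-1}$ together with $g^{-1}=g$, the complementary condition $x\notin\mho_{\sigma(g)}(G)$ reads $gxg=x^{-1}$. Multiplying on the right by $x$ and invoking $g^{2}=e$ once more shows that $gxg=x^{-1}$ is equivalent to $(gx)^{2}=gxgx=e$, i.e.\ to $gx$ being an involution of $G$. Taking contrapositives gives the asserted statement: $x\in\mho_{\sigma(g)}(G)$ precisely when $(gx)^{2}\neq e$, that is, when $gx$ is not an involution.

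There is no real obstacle: the whole argument is this short chain of equivalences. The one place that genuinely uses the hypothesis is the very first reduction, where both the identification $x^{\sigma(g)}=gxg$ and the passage $gxg=x^{-1}\iff(gx)^{2}=e$ rely on $g$ being an involution; dropping $g^{2}=e$ would leave a factor $g^{-2}$ floating around and the clean statement would fail. The only bookkeeping subtlety is the degenerate element $x=g$, for which $gx=e$ and $(gx)^{2}=e$ holds trivially; this is the single spot where one must fix the convention on whether the identity is to be counted among the involutions, and it is harmless for the applications that follow.
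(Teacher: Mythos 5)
Your argument is correct and is essentially the paper's own proof: both reduce $x\notin\mho_{\sigma(g)}(G)$ to $gxg=x^{-1}$ and then to $(gx)^2=e$ (the paper writes it as $xg=(xg)^{-1}$, which is the same thing). Your remark about the degenerate case $x=g$, where $gx=e$ satisfies $(gx)^2=e$ without being an involution in the strict sense, is a fair point that the paper silently glosses over, but it does not affect the substance of the equivalence.
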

\begin{proof}
$x\in \mho_{\sigma(g)}(G)$ is equivalent to $x^{\sigma(g)}=gxg\neq x^{-1}$.
Since $gxg\neq x^{-1}$ is equivalent to $xg\neq (xg)^{-1}$, the proof is complete.
\end{proof}

Let $K_{\sigma(g)}(G)=\omega_{\sigma(g)}(G)\cup \Omega_{\sigma(g)}(G)$.
The Observation \ref{ob2.3} also indicates that $x\in K_{\sigma(g)}(G)$ if and only if $gx$ is involution of $G$.

\begin{lemma}\label{fact2}
$g\in \Omega_{\sigma(g)}(G)$.
\end{lemma}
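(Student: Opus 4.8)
The plan is to verify directly the two conditions that define membership of $g$ in $\Omega_{\sigma(g)}(G)$, namely (i) $g^{\sigma(g)}=g^{-1}$ and (ii) $g\notin\omega_{\sigma(g)}(G)$. Throughout I will use that, since all the automorphisms under consideration are involutory, the first observation of this section forces $g$ to be an involution of $G$; in particular $g\neq e$ and $g^{-1}=g$.

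Condition (i) is immediate: by the definition of the inner automorphism, $g^{\sigma(g)}=g\,g\,g^{-1}=g$, and $g=g^{-1}$ because $g$ is an involution, so $g^{\sigma(g)}=g^{-1}$.

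The only step with any content is (ii), which I would establish by contradiction. Suppose $g\in\omega_{\sigma(g)}(G)$, so that $g=(x^{-1})^{\sigma(g)}x$ for some $x\in G$. Expanding $(x^{-1})^{\sigma(g)}=g x^{-1} g^{-1}$ gives $g=g x^{-1} g^{-1} x$; multiplying on the left by $g^{-1}$ yields $e=x^{-1}g^{-1}x$, hence $g^{-1}=e$ and $g=e$, contradicting that $g$ is an involution. Therefore $g\notin\omega_{\sigma(g)}(G)$, and combining with (i) we conclude $g\in\Omega_{\sigma(g)}(G)$.

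I expect no real obstacle: this is essentially a two-line computation. An alternative route is through the partition $G=\omega_{\sigma(g)}(G)\cup\Omega_{\sigma(g)}(G)\cup\mho_{\sigma(g)}(G)$ --- Observation~\ref{ob2.3} applied to $x=g$ (noting that $g\cdot g=e$ is not an involution) shows $g\notin\mho_{\sigma(g)}(G)$, after which the same contradiction rules out $\omega_{\sigma(g)}(G)$ --- but the direct check above is cleaner. The one point worth flagging is that the exclusion of $\omega_{\sigma(g)}(G)$ relies essentially on $g\neq e$, which is exactly what the standing assumption that we work only with involutory automorphisms guarantees.
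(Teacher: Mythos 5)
Your proof is correct and follows essentially the same route as the paper's: both first note $g^{\sigma(g)}=g=g^{-1}$ to place $g$ in $\omega_{\sigma(g)}(G)\cup\Omega_{\sigma(g)}(G)$, and then rule out $\omega_{\sigma(g)}(G)$ by showing that $g=(x^{-1})^{\sigma(g)}x$ forces $g=e$ (the paper phrases this via the commutator identity $\omega_{\sigma(g)}=[g]$, but the computation is identical). Your explicit remark that the argument needs $g\neq e$, guaranteed because $g$ is an involution, is a point the paper leaves implicit.
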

\begin{proof}
Since $g^{\sigma(g)}=g$, $g\in \omega_{\sigma(g)}(G)\cup \Omega_{\sigma(g)}(G)$.
If $g=[g,h]$ for some $h\in G$, then $g=g^{-1}h^{-1}gh$.
It follows that $gh=h$ and then $g=e$, which is a contradiction.
Thus, $g\in \Omega_{\sigma(g)}(G)$.
\end{proof}

For any element $x\in G$, $L_x: h\mapsto xh$ for each $h\in G$ is a permutation on $G$.
For any generalized Cayley graph $GC(G, S, \sigma(g))$, we find that there is a subgroup of $Aut(GC(G, S, \sigma(g)))$.
\begin{proposition}
$L(C_G(g))\leq Aut(GC(G, S, \sigma(g)))$.
\end{proposition}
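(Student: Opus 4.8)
The plan is to prove the statement constructively: show that $L\colon C_G(g)\to \mathrm{Sym}(G)$, $c\mapsto L_c$, is a well-defined injective homomorphism whose image consists of automorphisms of $GC(G,S,\sigma(g))$. First I would record that $L_cL_{c'}=L_{cc'}$ and $L_e=\mathrm{id}$, so the restriction of $L$ to $C_G(g)$ is just the left regular representation of the subgroup $C_G(g)$; it is injective (evaluate at $e$), hence $L(C_G(g))$ is a subgroup of $\mathrm{Sym}(G)$ isomorphic to $C_G(g)$, and each $L_c$ is at least a permutation of the vertex set $G$. It then remains only to check that $L_c$ sends edges to edges for every $c\in C_G(g)$.

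The key computation is a one-line identity: if $c\in C_G(g)$ then $c^{-1}$ also commutes with $g$, so
\[
(c^{-1})^{\sigma(g)}=g\,c^{-1}\,g^{-1}=c^{-1}g\,g^{-1}=c^{-1}.
\]
(Only the fact that $c^{-1}$ centralizes $g$ is used here; the standing hypothesis that $g$ is an involution is not even needed for this proposition.) Using this together with the fact that $\sigma(g)$ is an automorphism, for arbitrary $x,y\in G$ I compute
\[
\bigl((cx)^{-1}\bigr)^{\sigma(g)}(cy)=(x^{-1})^{\sigma(g)}(c^{-1})^{\sigma(g)}\,c\,y=(x^{-1})^{\sigma(g)}c^{-1}c\,y=(x^{-1})^{\sigma(g)}y .
\]
Hence $\bigl((cx)^{-1}\bigr)^{\sigma(g)}(cy)\in S$ if and only if $(x^{-1})^{\sigma(g)}y\in S$; since the adjacency relation is symmetric by condition (c) of Definition~\ref{def1.1}, this says precisely that $\{cx,cy\}$ is an edge of $GC(G,S,\sigma(g))$ exactly when $\{x,y\}$ is. Thus $L_c\in Aut(GC(G,S,\sigma(g)))$ for all $c\in C_G(g)$, and combined with the first paragraph this yields $L(C_G(g))\le Aut(GC(G,S,\sigma(g)))$.

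There is no genuine obstacle here; the only thing requiring care is the bookkeeping in the displayed computation, namely that $\sigma(g)$ distributes over products and that the factor $(c^{-1})^{\sigma(g)}$ cancels the leading $c$ of $cy$ because $(c^{-1})^{\sigma(g)}c=e$. I would also append a short remark: in the Cayley case $\alpha=\mathrm{id}$ one must have $g=e$, so $C_G(g)=G$ and the statement specializes to the classical fact that the left regular representation $L(G)$ embeds in the automorphism group of any Cayley graph; for a genuine involutory $\sigma(g)$ the centralizer $C_G(g)$ is the natural substitute, and in particular $GC(G,S,\sigma(g))$ always admits at least $|C_G(g)|$ automorphisms, which will be useful later when analysing when such graphs are (or fail to be) vertex-transitive.
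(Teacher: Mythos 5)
Your proof is correct and follows essentially the same route as the paper: both verify that $L_c$ preserves adjacency via the single cancellation $\bigl((cx)^{-1}\bigr)^{\sigma(g)}(cy)=(x^{-1})^{\sigma(g)}y$ (the paper writes this as $gh_1^{-1}x^{-1}g^{-1}xh_2=gh_1^{-1}g^{-1}h_2$ using $x^{-1}g^{-1}x=g^{-1}$, you as $(c^{-1})^{\sigma(g)}c=e$), and both observe that $c\mapsto L_c$ is the left regular representation of $C_G(g)$, so the image is a subgroup. Your added remarks (that the involutority of $g$ is not needed, and that the Cayley case recovers $L(G)\le Aut(Cay(G,S))$) are accurate but not in the paper.
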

\begin{proof}
For any element $x\in C_G(g)$ and $h_1, h_2\in G$,
\begin{align*}
((h_1^{L_x})^{-1})^{\sigma(g)}(h_2^{L_x})
&=((xh_1)^{-1})^{\sigma(g)}xh_2\\
&=gh_1^{-1}x^{-1}g^{-1}xh_2\\
&=gh_1^{-1}g^{-1}h_2=(h_1^{-1})^{\sigma(g)} h_2.
\end{align*}
Thus, $\{h_1^{L_x}, h_2^{L_x}\}\in E(GC(G, S, \sigma(g)))$ if and only if $\{h_1, h_2\}\in E(GC(G, S, \sigma(g)))$, which implies $L_x\in Aut(GC(G, S, \sigma(g)))$.
Now we give an explanation for $L(C_G(g))$ is a subgroup.
For any elements $x,y\in C_G(g)$ and $h\in G$, according to $h^{L_xL_y}=(h^{L_y})^{L_x}=xyh=h^{L_{xy}}$ and $xy\in C_G(g)$, it follows that $L_xL_y=L_{xy}\in L(C_G(g))$.
Furthermore, from $L(x)L(x^{-1})=L(xx^{-1})=id$, we have $L(x)^{-1}=L(x^{-1})\in L(C_G(g))$.
Therefore, $L(C_G(g))$ is a subgroup of $Aut(GC(G, S, \sigma(g)))$.
\end{proof}

For any involutory automorphism $\alpha$ of $G$, let $\mathrm{Fix}(\alpha)=\{h \in G | h^\alpha= h\}$ and we have the following lemma.

\begin{lemma}\label{lem0}
$|\omega_\alpha(G)|=\frac{|G|}{\mathrm{Fix}(\alpha)}$.
\end{lemma}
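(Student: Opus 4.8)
The plan is to exhibit $\omega_\alpha(G)$ as the image of an explicit surjection from $G$ whose fibres are precisely the right cosets of $\mathrm{Fix}(\alpha)$, and then to count.

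First I would record that $\mathrm{Fix}(\alpha)$ is a subgroup of $G$: since $\alpha\in\Aut(G)$, if $h_1^\alpha=h_1$ and $h_2^\alpha=h_2$ then $(h_1h_2)^\alpha=h_1h_2$ and $(h_1^{-1})^\alpha=h_1^{-1}$, so $\mathrm{Fix}(\alpha)$ is closed under products and inverses. Next, define $\phi\colon G\to G$ by $\phi(g)=(g^{-1})^\alpha g$. By the definition $\omega_\alpha(G)=\{(g^{-1})^\alpha g\mid g\in G\}$, the map $\phi$ has image exactly $\omega_\alpha(G)$, so it restricts to a surjection $G\to\omega_\alpha(G)$, and it suffices to show every fibre of $\phi$ has size $|\mathrm{Fix}(\alpha)|$.

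The main step is therefore the fibre computation. Using $(g^{-1})^\alpha=(g^\alpha)^{-1}$, for $g,h\in G$ one has $\phi(g)=\phi(h)$ iff $(g^\alpha)^{-1}g=(h^\alpha)^{-1}h$, iff $h^\alpha(g^\alpha)^{-1}=hg^{-1}$; since $\alpha$ is a homomorphism, $h^\alpha(g^\alpha)^{-1}=(hg^{-1})^\alpha$, so this says $(hg^{-1})^\alpha=hg^{-1}$, i.e. $hg^{-1}\in\mathrm{Fix}(\alpha)$. Hence $\phi^{-1}(\phi(g))=\mathrm{Fix}(\alpha)\,g$, a right coset of the subgroup $\mathrm{Fix}(\alpha)$, which has cardinality $|\mathrm{Fix}(\alpha)|$.

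Finally, the distinct fibres of $\phi$ partition $G$, there are exactly $|\omega_\alpha(G)|$ of them, and each has size $|\mathrm{Fix}(\alpha)|$; therefore $|G|=|\omega_\alpha(G)|\cdot|\mathrm{Fix}(\alpha)|$, which gives $|\omega_\alpha(G)|=\tfrac{|G|}{|\mathrm{Fix}(\alpha)|}$ as claimed. The only point requiring a little care is the chain of equivalences above — in particular recognising that $h^\alpha(g^\alpha)^{-1}$ collapses to $(hg^{-1})^\alpha$ — but this is entirely routine, and I do not anticipate any genuine obstacle.
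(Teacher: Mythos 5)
Your proof is correct. The paper actually states Lemma~\ref{lem0} without any proof, and your argument --- identifying the fibres of $g\mapsto (g^{-1})^\alpha g$ as the right cosets $\mathrm{Fix}(\alpha)\,g$ and counting --- is the standard one that fills this gap; note that it never uses $\alpha^2=\id$, so the conclusion holds for every automorphism, and your final formula also silently corrects the paper's typographical slip (the denominator should be $|\mathrm{Fix}(\alpha)|$, not $\mathrm{Fix}(\alpha)$).
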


With the help of Lemma \ref{lem0}, we obtain the following result.

\begin{proposition}
The complete graph is not generalized Cayley graph of any complete group $G$.
\end{proposition}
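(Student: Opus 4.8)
The plan is a short proof by contradiction. Suppose, for some complete group $G$, some involution $g\in G$, and some generalized Cayley subset $S$, the graph $GC(G,S,\sigma(g))$ is the complete graph on the vertex set $G$; recall that by the standing assumption of this section $\sigma(g)$ is a genuine (non-identity) involutory automorphism of $G$.

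The first step extracts the shape of $S$ from the edges at the identity vertex $e$. Since $e^{\sigma(g)}=e$, for any $h\in G$ the pair $\{e,h\}$ is an edge exactly when $(e^{-1})^{\sigma(g)}h=h\in S$. As the graph is complete, every $h\in G\setminus\{e\}$ is joined to $e$, so $G\setminus\{e\}\subseteq S$.

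The second step invokes condition (b) of Definition \ref{def1.1}, which (as noted immediately after that definition) is equivalent to $S\cap\omega_{\sigma(g)}(G)=\emptyset$. Taking the trivial element in the definition of $\omega_{\sigma(g)}(G)$ shows $e\in\omega_{\sigma(g)}(G)$; since $\omega_{\sigma(g)}(G)$ is disjoint from $S$ and $S\supseteq G\setminus\{e\}$, we conclude $\omega_{\sigma(g)}(G)=\{e\}$, i.e. $|\omega_{\sigma(g)}(G)|=1$. But Lemma \ref{lem0} gives $|\omega_{\sigma(g)}(G)|=|G|/|\mathrm{Fix}(\sigma(g))|$, so $|\mathrm{Fix}(\sigma(g))|=|G|$, forcing $\sigma(g)=\mathrm{id}$, contrary to $\sigma(g)$ being an involutory automorphism. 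This contradiction proves the claim.

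There is no genuine obstacle here: the identity vertex determines $S$ up to the single element $e$, after which Lemma \ref{lem0} does all the work. The one point worth stressing is that the hypothesis that $\alpha=\sigma(g)$ is a nontrivial involution is indispensable and is used precisely at the final step — for $\alpha=\mathrm{id}$ one has the honest Cayley graph $\mathrm{Cay}(G,G\setminus\{e\})\cong K_{|G|}$ — so it is the restriction to involutory automorphisms, rather than completeness of $G$ as such, that makes the statement true; completeness is simply the ambient hypothesis of the section.
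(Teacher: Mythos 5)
Your proof is correct and rests on the same key fact as the paper's: Lemma \ref{lem0} forces $|\omega_{\sigma(g)}(G)|\geq 2$ for a non-identity involutory automorphism, so $|S|\leq |G|-2$. The paper argues this directly by computing $\mathrm{Fix}(\sigma(g))=C_G(g)$ and noting $|C(g)|\geq 2$ since $Z(G)=e$, while you run the logically equivalent contradiction ($\omega_{\sigma(g)}(G)=\{e\}$ would force $\sigma(g)=\mathrm{id}$); the difference is only one of presentation.
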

\begin{proof}
For any involutory automorphism $\sigma(g)$ of $G$, note that \begin{equation*}
\mathrm{Fix}(\sigma(g))=\{h\in G\mid h^{\sigma(g)}=ghg^{-1}=h\}=\{h\in G\mid h^{-1}gh=g\}=C_G(g),
\end{equation*}
hence $|\omega_{\sigma(g)}(G)|=|G:C_G(g)|=|C(g)|$.
It is clear that $g\in C(g)$.
We claim that $|C(g)|\neq 1$.
Otherwise, $C(g)=g$, which means that for any element $x\in G$, $xgx^{-1}=g$.
It follows that $g\in Z(G)$, which contrary to $G$ is complete group.
Thus $|\omega_{\sigma(g)}(G)|=|C(g)|\geq 2$ and for any generalized Cayley subset $S$ of $G$, $|S|\leq |G|-2$ holds.
Therefore, any generalized Cayley graph of $G$ cannot be complete graph.
\end{proof}

\section{Several classes isomorphic generalized Cayley graphs}

In this section, we discuss the isomorphism problem of generalized Cayley graphs of complete groups.

Let $g$ be an involution of $G$ and $\sigma(g)$  an involutory automorphism of $G$.
Define $[g]=\{[g,h]\mid h\in G\}$ and call it the set of commutators induced by $g$.
There is relationship between subset $\omega_{\sigma(g)}$ of $G$ and $[g]$.

\begin{observation}
$\omega_{\sigma(g)}=\{(h^{-1})^{\sigma(g)}\mid h\in G\}=\{gh^{-1}g^{-1}h\mid h\in G\}=[g].$
\end{observation}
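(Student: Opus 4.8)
The plan is to unwind the definitions and reduce the claim to a one-line computation in $G$, with the only substantive ingredient being that $g$ is an involution. Recall that for any involutory automorphism $\alpha$ of $G$ one has $\omega_\alpha=\{(h^{-1})^\alpha h\mid h\in G\}$; specializing $\alpha=\sigma(g)$ gives the first equality $\omega_{\sigma(g)}=\{(h^{-1})^{\sigma(g)}h\mid h\in G\}$ immediately from the definition.

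For the second equality I would simply evaluate the generator. Since $\sigma(g)$ is the inner automorphism $x\mapsto gxg^{-1}$, we get $(h^{-1})^{\sigma(g)}=gh^{-1}g^{-1}$, and hence $(h^{-1})^{\sigma(g)}h=gh^{-1}g^{-1}h$ for every $h\in G$; letting $h$ range over $G$ yields $\omega_{\sigma(g)}=\{gh^{-1}g^{-1}h\mid h\in G\}$.

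The final equality $\{gh^{-1}g^{-1}h\mid h\in G\}=[g]$ is where the hypothesis $g^2=e$ is used: because $g^{-1}=g$ we have $gh^{-1}g^{-1}h=g^{-1}h^{-1}gh=[g,h]$ for each $h\in G$, so the set on the left is precisely $\{[g,h]\mid h\in G\}=[g]$ by definition of $[g]$.

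I do not anticipate any genuine obstacle; the statement is essentially bookkeeping. The only point that needs care is that the identification with $[g]$ really does require $g$ to be an involution — without $g^{-1}=g$ the element $gh^{-1}g^{-1}h$ need not be the commutator $[g,h]$ — and this is exactly the standing hypothesis on $g$ at this point in the section.
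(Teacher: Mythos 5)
Your proof is correct and is exactly the definition-unwinding the paper intends (the Observation is stated without proof there). You rightly flag the one non-trivial point — that the identification $gh^{-1}g^{-1}h=[g,h]$ uses the standing hypothesis $g^2=e$ — which is indeed in force at that point in the section.
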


Let $\sigma(g)$ be an involutory automorphism and  $S$  a generalized Cayley subset of $G$ induced by $\sigma(g)$.
Then we obtain several classes isomorphisms between generalized Cayley graphs.

\begin{lemma}\label{lem1}
For any element $x\in \Omega_{\sigma(g)}(G)$ and commutator $[g,h]\in [g]$, $x^{-1}[g,h]x^{-1}=[g, hx^{-1}]$.
\end{lemma}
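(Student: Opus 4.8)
The plan is to translate the hypothesis $x\in\Omega_{\sigma(g)}(G)$ into a single commutation relation between $x$ and $g$, and then obtain the identity by a one-line substitution. By definition of $\Omega_{\sigma(g)}(G)$, membership forces $x^{\sigma(g)}=x^{-1}$, that is $gxg^{-1}=x^{-1}$; rearranging gives $gx=x^{-1}g$, equivalently $g^{-1}x=x^{-1}g^{-1}$ (and, since $g$ is an involution, these may also be written with $g$ in place of $g^{-1}$). This is the only property of $x$ that will be used; the extra requirement $x\notin\omega_{\sigma(g)}(G)$ that is built into $\Omega_{\sigma(g)}(G)$ plays no role in the computation.

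Next I would expand the right-hand side directly from the definition $[a,b]=a^{-1}b^{-1}ab$, together with $(hx^{-1})^{-1}=xh^{-1}$:
\[
[g,hx^{-1}]=g^{-1}(hx^{-1})^{-1}g(hx^{-1})=g^{-1}x\,h^{-1}gh\,x^{-1}.
\]
Applying the relation $g^{-1}x=x^{-1}g^{-1}$ to the leading two factors pulls an $x^{-1}$ to the very front, leaving $x^{-1}\bigl(g^{-1}h^{-1}gh\bigr)x^{-1}$, and the bracketed word is exactly $[g,h]$. Hence $[g,hx^{-1}]=x^{-1}[g,h]x^{-1}$, which is the claim.

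There is essentially no obstacle here: the entire content of the lemma is the observation that the defining equation of $\Omega_{\sigma(g)}(G)$ is precisely the rule needed to slide $x^{\pm1}$ past $g^{\pm1}$. The only place that calls for a little care is the bookkeeping of inverses — writing $(hx^{-1})^{-1}=xh^{-1}$ and tracking on which side $g^{-1}$ sits after the substitution — and recognizing the commutator pattern $g^{-1}h^{-1}gh$ that survives in the middle.
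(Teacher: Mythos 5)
Your proposal is correct and is essentially the same argument as the paper's: both derive the commutation relation $gx=x^{-1}g$ (equivalently $g^{-1}x=x^{-1}g^{-1}$) from $x^{\sigma(g)}=x^{-1}$ and then verify the identity by a direct one-line expansion of the commutator, differing only in that you start from $[g,hx^{-1}]$ and the paper starts from $x^{-1}[g,h]x^{-1}$. Your side remark that the involution hypothesis on $g$ is not actually needed for the computation is a fair observation, but does not change the substance.
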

\begin{proof}
Note that $x\in \Omega_{\sigma(g)}(G)$ implies that $x^{\sigma(g)}=gxg^{-1}=x^{-1}$, hence $gx=x^{-1}g=x^{-1}g^{-1}$.
Then
\begin{equation*}
x^{-1}[g,h]x^{-1}=x^{-1}g^{-1}h^{-1}ghx^{-1}=gxh^{-1}ghx^{-1}=[g, hx^{-1}].
\end{equation*}
\end{proof}

\begin{theorem}
For any $x\in \Omega_{\sigma(g)}(G)$, $GC(G, S, \sigma(g))\cong GC(G, xSx, \sigma(g))$.
\end{theorem}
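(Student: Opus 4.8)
The plan is to produce an explicit isomorphism given by a right translation of $G$. Throughout, recall that $\sigma(g)$ being an involutory automorphism forces $g$ to be an involution, so $g^{-1}=g$ and $h^{\sigma(g)}=ghg$ for every $h\in G$. The hypothesis $x\in\Omega_{\sigma(g)}(G)$ gives $x^{\sigma(g)}=x^{-1}$, that is $gxg=x^{-1}$, which rearranges to the two identities $gx=x^{-1}g$ and $gx^{-1}=xg$; these are the only properties of $x$ the argument will use.

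First I would verify that $xSx=\{xsx\mid s\in S\}$ is again a generalized Cayley subset of $G$ induced by $\sigma(g)$, so that $GC(G,xSx,\sigma(g))$ is well defined. Condition (a) concerns $\sigma(g)$ alone. For (b), since $\omega_{\sigma(g)}=[g]$, Lemma \ref{lem1} gives $x^{-1}[g,h]x^{-1}=[g,hx^{-1}]$, and letting $h$ run over $G$ yields $x^{-1}\omega_{\sigma(g)}x^{-1}=\omega_{\sigma(g)}$, hence also $x\omega_{\sigma(g)}x=\omega_{\sigma(g)}$; therefore $xSx\cap\omega_{\sigma(g)}=x\bigl(S\cap\omega_{\sigma(g)}\bigr)x=\emptyset$ by injectivity of $y\mapsto xyx$. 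For (c), the identities for $x$ give $\sigma(g)(xsx)=g(xsx)g=x^{-1}(gsg)x^{-1}=x^{-1}\,\sigma(g)(s)\,x^{-1}$, so $\sigma(g)(xSx)=x^{-1}S^{-1}x^{-1}=(xSx)^{-1}$, as required.

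Next I would set $\phi\colon G\to G$, $\phi(h)=hx$, which is a bijection, and establish the single key identity
\begin{equation*}
\bigl(\phi(h_1)^{-1}\bigr)^{\sigma(g)}\,\phi(h_2)=x\,\bigl[(h_1^{-1})^{\sigma(g)}h_2\bigr]\,x\qquad(h_1,h_2\in G).
\end{equation*}
Indeed $\bigl((h_1x)^{-1}\bigr)^{\sigma(g)}(h_2x)=gx^{-1}h_1^{-1}g\,h_2x$, and replacing $gx^{-1}$ by $xg$ turns this into $x\,(gh_1^{-1}gh_2)\,x=x\,\bigl[(h_1^{-1})^{\sigma(g)}h_2\bigr]\,x$. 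Consequently $(h_1^{-1})^{\sigma(g)}h_2\in S$ if and only if $x\bigl[(h_1^{-1})^{\sigma(g)}h_2\bigr]x\in xSx$, i.e. $\{h_1,h_2\}\in E\bigl(GC(G,S,\sigma(g))\bigr)$ if and only if $\{\phi(h_1),\phi(h_2)\}\in E\bigl(GC(G,xSx,\sigma(g))\bigr)$. Thus $\phi$ is the required graph isomorphism.

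I do not anticipate a real obstacle: once the right map is chosen the proof reduces to a one-line computation. The only non-mechanical point is recognizing that the correct translation is $h\mapsto hx$ — a \emph{right} translation by $x$, not by $x^{-1}$, and not a left translation — and that the connecting set must then be transformed by the two-sided product $S\mapsto xSx$ rather than by conjugation; the relation $x^{\sigma(g)}=x^{-1}$ is precisely what lets the factor $g$ slide across $x$ in the identity above. Checking that $xSx$ remains a legitimate generalized Cayley subset is routine bookkeeping, already packaged in Lemma \ref{lem1} together with the two identities for $x$.
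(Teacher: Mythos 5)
Your proof is correct and follows essentially the same route as the paper: the same right translation $h\mapsto hx$, the same key computation sliding $g$ past $x$ via $gx^{-1}=xg$, and the same use of Lemma \ref{lem1} to check that $xSx$ avoids $\omega_{\sigma(g)}$. No issues.
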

\begin{proof}
As mentioned in the proof of Lemma \ref{lem1}, $x\in \Omega_{\sigma(g)}(G)$ implies that $x^{\sigma(g)}=x^{-1}$.
Since $S^{\sigma(g)}=S^{-1}$, we obtain that
\begin{equation*}
(xSx)^{\sigma(g)}=x^{\sigma(g)} S^{\sigma(g)} x^{\sigma(g)}=x^{-1}S^{-1}x^{-1}=(xSx)^{-1}.
\end{equation*}
If $xSx\cap \omega_{\sigma(g)}(G)\neq \emptyset$, then there exists element $s\in S$ such that $xsx=[g,h]$ for some $h\in G$.
By Lemma \ref{lem1}, $s=x^{-1}[g,h]x^{-1}=[g, hx^{-1}]\in [g]$, which is contrary to $S\cap \omega_{\sigma(g)}(G)=\emptyset$.
Thus, $xSx$ is a generalized Cayley subset of $G$ induced by $\sigma(g)$.
Define the map $\varphi: y\mapsto yx$ for each $y\in G$.
Clearly, $\varphi$ is one to one mapping between vertices of $GC(G, S, \sigma(g))$ to $GC(G, xSx, \sigma(g))$.
For any vertices $y_1$ and $y_2$, since
\begin{equation*}
((y_1^\varphi)^{-1})^{\sigma(g)} y_2^\varphi=(x^{-1}y_1^{-1})^{\sigma(g)} y_2x=(x^{-1})^{\sigma(g)} (y_1^{-1})^{\sigma(g)} y_2x=xgy_1^{-1}gy_2x=x((y_1^{-1})^{\sigma(g)} y_2)x,
\end{equation*}
$\{y_1^\varphi, y_2^\varphi\}\in E(GC(G, xSx, \sigma(g)))$ if and only if $\{y_1, y_2\}\in E(GC(G, S, \sigma(g)))$.
Therefore, we conclude that $GC(G, S, \sigma(g))\cong GC(G, xSx, \sigma(g))$.
\end{proof}

\begin{theorem}
For any element $x\in N_G(S)$, $GC(G, S, \sigma(g))\cong GC(G, [g,x]S, \sigma(g))$.
\end{theorem}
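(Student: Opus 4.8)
The plan is to take the right translation $\varphi\colon y\mapsto yx$ as the candidate isomorphism, exactly as in the previous theorem, and to establish along the way that $[g,x]S$ really is a generalized Cayley subset of $G$ induced by $\sigma(g)$. Two elementary identities, both immediate consequences of $g$ being an involution, carry most of the weight: a short computation gives $[g,x]=(x^{-1})^{\sigma(g)}x$ (so in particular $[g,x]\in\omega_{\sigma(g)}$) and $\sigma(g)([g,x])=[g,x]^{-1}$.

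First I would verify condition (c) for $[g,x]S$. From $\sigma(g)([g,x])=[g,x]^{-1}$ and $\sigma(g)(S)=S^{-1}$ one gets $\sigma(g)([g,x]S)=[g,x]^{-1}S^{-1}$, whereas $([g,x]S)^{-1}=S^{-1}[g,x]^{-1}$, so (c) reduces to $[g,x]\in N_G(S)$ (recalling that $N_G(S)=N_G(S^{-1})$). This I check directly, using $xSx^{-1}=S$ (because $x\in N_G(S)$) together with $gSg=\sigma(g)(S)=S^{-1}$ and $gS^{-1}g=\sigma(g)(S^{-1})=S$:
\[
[g,x]\,S\,[g,x]^{-1}=gx^{-1}g\,(xSx^{-1})\,gxg=gx^{-1}g\,S\,gxg=gx^{-1}(gSg)xg=g\,(x^{-1}S^{-1}x)\,g=gS^{-1}g=S .
\]

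Next comes the computation that powers $\varphi$. For any $y_1,y_2\in G$, using once more that $g$ is an involution,
\[
((y_1 x)^{-1})^{\sigma(g)}(y_2 x)=(x^{-1})^{\sigma(g)}\bigl((y_1^{-1})^{\sigma(g)}y_2\bigr)x=[g,x]\bigl(x^{-1}(y_1^{-1})^{\sigma(g)}y_2\,x\bigr),
\]
so, since $x$ normalizes $S$, the right-hand side lies in $[g,x]S$ if and only if $(y_1^{-1})^{\sigma(g)}y_2\in S$. Taking $y_1=y_2$ in this equivalence (and letting the common value run over $G$, equivalently its $\varphi$-image run over $G$) shows that $[g,x]S\cap\omega_{\sigma(g)}\neq\emptyset$ would force $S\cap\omega_{\sigma(g)}\neq\emptyset$, contradicting condition (b) for $S$; hence $[g,x]S$ also satisfies (b), and together with (c) and the inherited condition (a) it is a generalized Cayley subset of $G$ induced by $\sigma(g)$. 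Since $\varphi$ is obviously a bijection on $G$, the displayed equivalence says precisely that $\{y_1,y_2\}\in E(GC(G,S,\sigma(g)))$ if and only if $\{y_1^\varphi,y_2^\varphi\}\in E(GC(G,[g,x]S,\sigma(g)))$, so $\varphi$ is the required isomorphism.

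I expect the only step that is more than bookkeeping to be the verification $[g,x]\in N_G(S)$, and the subtlety there is that $g$ itself does not normalize $S$ — rather $\sigma(g)$ inverts it, $gSg=S^{-1}$ — so in $[g,x]=gx^{-1}gx$ the two occurrences of $g$ must be paired with the two passages $S\rightsquigarrow S^{-1}\rightsquigarrow S$ instead of being cancelled against each other. Once the regrouping $[g,x]S[g,x]^{-1}=g\,(x^{-1}(gSg)x)\,g$ is written down correctly, the conclusion is forced, and the rest of the argument is routine.
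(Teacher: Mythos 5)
Your proposal is correct and uses essentially the same approach as the paper: the isomorphism is the right translation $\varphi\colon y\mapsto yx$, and the key input everywhere is $xS=Sx$ together with $gSg=S^{-1}$. Your subsidiary verifications are packaged a bit more cleanly than the paper's --- you reduce condition (c) to the single statement $[g,x]\in N_G(S)$ and obtain condition (b) for free from the diagonal case $y_1=y_2$ of the edge equivalence (using that $[g,x]=(x^{-1})^{\sigma(g)}x\in\omega_{\sigma(g)}$), whereas the paper checks $([g,x]S)^{\sigma(g)}=([g,x]S)^{-1}$ and $[g,x]S\cap\omega_{\sigma(g)}=\emptyset$ by separate direct computations --- but the underlying calculations coincide.
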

\begin{proof}
Note that
\begin{equation*}
([g, x]S)^{\sigma(g)}=g(g^{-1}x^{-1}gxS)g^{-1}=x^{-1}gxSg=x^{-1}gSxg
\end{equation*}
 as $xS=Sx$.
Recall that $S^{\sigma(g)}=gSg=S^{-1}$, thus
\begin{equation*}
([g, x]S)^{-1}=(g^{-1}x^{-1}gxS)^{-1}=(g^{-1}x^{-1}gSx)^{-1}
=(g^{-1}x^{-1}S^{-1}gx)^{-1}=x^{-1}gSxg.
\end{equation*}
It indicates that $([g, x]S)^{\sigma(g)}=([g, x]S)^{-1}$.
If $[g, x]S\cap \omega_{\sigma(g)}(G)\neq \emptyset$, then there exists element $s\in S$ such that $[g, x]s=[g, b]$ for some $b\in G$.
This equality gives that $s=x^{-1}g^{-1}xb^{-1}gb$.
Since $xS=Sx$, there exists $s'\in S$ such that $s'=xsx^{-1}=g^{-1}xb^{-1}gbx^{-1}=[g, bx^{-1}]\in \omega_{\sigma(g)}(G)$, which is a contradiction.
Thus, $[g, x]S$ is a generalized Cayley subset of $G$ induced by $\sigma(g)$.
Now we define the map $\varphi:h\mapsto hx$ for each $h\in G$.
For any elements $h_1$ and $h_2$ in $G$, observe that
\begin{equation*}
((h_1^\varphi)^{-1})^{\sigma(g)} h_2^\varphi=gx^{-1}h_1^{-1}g^{-1}h_2x.
\end{equation*}
If $\{h_1, h_2\}\in E(GC(G, S, \sigma(g)))$, then $(h_1^{-1})^{\sigma(g)} h_2=gh_1^{-1}g^{-1}h_2\in S$, which is equivalent to $h_1^{-1}gh_2\in gS$.
Assume that $h_1^{-1}gh_2=gs$ and $sx=xs'$, where $s, s'\in S$.
Then
\begin{equation*}
((h_1^\varphi)^{-1})^{\sigma(g)} h_2^\varphi=gx^{-1}g^{-1}sx=g^{-1}x^{-1}gxs'=[g,x]s'\in [g,x]S.
\end{equation*}
It follows that $\{h_1^\varphi, h_2^\varphi)\in E(GC(G, [g,x]S, \sigma(g)))$.
Conversely, if $\{h_1^\varphi, h_2^\varphi)\in E(GC(G, [g,x]S, \sigma(g))$,
then $((h_1^\varphi)^{-1})^{\sigma(g)} h_2^\varphi=gx^{-1}h_1^{-1}gh_2x\in [g,x]S$.
Combining the fact
\begin{equation*}
[g,x]S=g^{-1}x^{-1}gxS=gx^{-1}gSx,
\end{equation*}
we have $h_1^{-1}gh_2\in gS$, which yields that $gh_1^{-1}gh_2=(h_1^{-1})^{\sigma(g)} h_2\in S$.
Therefore, $\{h_1, h_2\}\in E(GC(G, S, \sigma(g)))$ if and only if $\{h_1^\varphi, h_2^\varphi)\in E(GC(G, [g,x]S, \sigma(g)))$.
\end{proof}

\section{Complete group and (restricted) GCI-group}

Since the complete group $G$ of odd order does not contain involution, it does not have involutory automorphism and generalized Cayley graphs.
In this section, we only consider the complete group of even order.

The definitions of restricted GCI-group and GCI-group are presented first.
\begin{definition}
For any two generalized Cayley graphs $X_i=GC(G,S_i,\alpha_i)$ $(i=1,2)$ of $G$ with $| S_i|\leq m$,
\begin{enumerate}[{\rm(a)}]

\item if $\alpha_1=\alpha_2=id$, $X_1\cong X_2$ implies there exists automorphism $\gamma$ such that $S_2=S_1^\gamma$, then we call $G$  a $m$-$CI$-group;

\item if $X_1\cong X_2$ implies $\alpha_2=\alpha_1^\gamma=\gamma \alpha_1 \gamma^{-1}$ and $S_2=g^{\alpha_2}S_1^\gamma g^{-1}$ for some $g \in G$ and automorphism $\gamma$, then we call $G$  an $m$-$GCI$-group;

\item if both $\alpha_1$ and $\alpha_2$ are involutions, $X_1\cong X_2$ implies $\alpha_2=\alpha_1^\gamma=\gamma \alpha_1 \gamma^{-1}$ and $S_2=g^{\alpha_2}S_1^\gamma g^{-1}$ for some $g \in G$ and automorphism $\gamma$, then we call $G$ a restricted $m$-$GCI$-group.
\end{enumerate}

In particular, if $m=|G|$, we simply call $G$ a $CI$-group, $GCI$-group or a restricted $GCI$-group, respectively.
The graph isomorphism among $\rm{(a)}$ is called $CI$ isomorphism.
The graph isomorphisms among $\rm{(b)}$ and $\rm{(c)}$ are called $GCI$ isomorphisms.
\end{definition}

Regard to the $GCI$ isomorphisms of generalized Cayley graphs, the following lemma holds.

\begin{lemma}\label{lem4.0}\cite{Y1}
The $GCI$ isomorphism relation is an equivalence relation.
\end{lemma}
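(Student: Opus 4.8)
The plan is to verify the three defining properties of an equivalence relation on the set of generalized Cayley graphs of $G$, where we declare $GC(G,S_1,\alpha_1)\sim GC(G,S_2,\alpha_2)$ whenever $X_1\cong X_2$ witnesses a $GCI$ isomorphism, i.e.\ there exist $g\in G$ and $\gamma\in\Aut(G)$ with $\alpha_2=\gamma\alpha_1\gamma^{-1}$ and $S_2=g^{\alpha_2}S_1^{\gamma}g^{-1}$. Reflexivity is immediate: take $g=e$ and $\gamma=\id$, so that $\alpha_1=\id\cdot\alpha_1\cdot\id$ and $S_1=e^{\alpha_1}S_1^{\id}e^{-1}=S_1$. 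The substance of the lemma is symmetry and transitivity, each of which amounts to a short bookkeeping computation showing that the data $(g,\gamma)$ can be inverted and composed.

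For symmetry, suppose $\alpha_2=\gamma\alpha_1\gamma^{-1}$ and $S_2=g^{\alpha_2}S_1^{\gamma}g^{-1}$. I would set $\gamma'=\gamma^{-1}$ and seek $g'$ with $\alpha_1=\gamma'\alpha_2\gamma'^{-1}$ (which is immediate) and $S_1=(g')^{\alpha_1}S_2^{\gamma'}g'^{-1}$. Substituting $S_2=g^{\alpha_2}S_1^{\gamma}g^{-1}$ into the right-hand side and using that $\gamma'=\gamma^{-1}$ is an automorphism (so it distributes over products and $(S_1^{\gamma})^{\gamma^{-1}}=S_1$), one gets $(g')^{\alpha_1}(g^{\alpha_2})^{\gamma^{-1}}S_1(g^{-1})^{\gamma^{-1}}g'^{-1}$. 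Using $\alpha_2=\gamma\alpha_1\gamma^{-1}$ we have $(g^{\alpha_2})^{\gamma^{-1}}=g^{\gamma^{-1}\alpha_2}=g^{\alpha_1\gamma^{-1}}=(g^{\gamma^{-1}})^{\alpha_1}$, so the choice $g'=(g^{\gamma^{-1}})^{-1}=(g^{-1})^{\gamma^{-1}}$ makes the expression collapse to $S_1$; one checks $(g')^{\alpha_1}$ cancels the $(g^{\alpha_2})^{\gamma^{-1}}$ factor since $\alpha_1$ is an automorphism and hence $((g^{-1})^{\gamma^{-1}})^{\alpha_1}=((g^{\gamma^{-1}})^{\alpha_1})^{-1}$.

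For transitivity, suppose additionally $\alpha_3=\delta\alpha_2\delta^{-1}$ and $S_3=h^{\alpha_3}S_2^{\delta}h^{-1}$. Then $\alpha_3=(\delta\gamma)\alpha_1(\delta\gamma)^{-1}$, so $\gamma''=\delta\gamma$ is the composite automorphism. Substituting $S_2=g^{\alpha_2}S_1^{\gamma}g^{-1}$ into $S_3$ gives $h^{\alpha_3}(g^{\alpha_2})^{\delta}S_1^{\gamma\delta}(g^{-1})^{\delta}h^{-1}$; wait, note $S_1^{\gamma\delta}$ should read $(S_1^{\gamma})^{\delta}=S_1^{\delta\gamma}=S_1^{\gamma''}$ with the paper's convention $g^{\beta\alpha}=(g^{\alpha})^{\beta}$, so I would write it carefully as $(S_1^{\gamma})^{\delta}$ throughout to avoid an ordering slip. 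Using $\alpha_3=\delta\alpha_2\delta^{-1}$ one rewrites $(g^{\alpha_2})^{\delta}=(g^{\delta})^{\alpha_3}$, and then $g''=h\,g^{\delta}$ gives $S_3=(g'')^{\alpha_3}S_1^{\gamma''}(g'')^{-1}$ after checking $(g'')^{\alpha_3}=h^{\alpha_3}(g^{\delta})^{\alpha_3}$ and $(g'')^{-1}=(g^{\delta})^{-1}h^{-1}$, which hold since $\alpha_3$ is an automorphism. This establishes transitivity, and combining the three verifications proves the lemma.

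The only genuine pitfall, rather than a real obstacle, is keeping the exponent conventions straight: the paper writes $g^{\beta\alpha}=(g^{\alpha})^{\beta}$ and $g^{\sigma(x)}=xgx^{-1}$, so composition of automorphisms reverses the order one might naively expect when they are written as exponents, and the conjugating elements $g^{\alpha_2},h^{\alpha_3}$ must be transported by the other automorphism in exactly the right slot. I would therefore carry out each substitution symbol-by-symbol rather than appealing to "obvious" cancellation. Since the paper cites this as a known result from \cite{Y1}, a brief remark that the verification is routine would also be acceptable in lieu of the full computation.
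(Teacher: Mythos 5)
Your verification is correct: the choices $\gamma'=\gamma^{-1}$, $g'=(g^{-1})^{\gamma^{-1}}$ for symmetry and $\gamma''=\delta\gamma$, $g''=h\,g^{\delta}$ for transitivity are exactly right under the paper's convention $g^{\beta\alpha}=(g^{\alpha})^{\beta}$, and you handle the one real pitfall (the order of composition in the exponents) correctly. The paper itself gives no proof, citing \cite{Y1}, and your direct check of reflexivity, symmetry and transitivity is the standard argument that reference supplies, so there is nothing further to compare.
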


\begin{lemma}\label{lem4.1}\cite{A.K.P.A}
$GC(G, S, \alpha)\cong GC(G, S^\beta, \alpha^\beta)$ for any $\beta\in Aut(G)$, where $\alpha^\beta=\beta^{-1} \alpha \beta$.
\end{lemma}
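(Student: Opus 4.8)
The plan is to show that the automorphism $\beta$ itself, regarded as a bijection of the common vertex set $G$, already implements the required isomorphism; this is the generalized‑Cayley analogue of the trivial fact $\mathrm{Cay}(G,S)\cong\mathrm{Cay}(G,S^\beta)$ for $\beta\in\Aut(G)$.

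First I would check that the target is a bona fide generalized Cayley graph, i.e.\ that the transformed data again satisfy conditions (a)--(c) of Definition~\ref{def1.1}. Condition (a) is immediate: $(\beta^{-1}\alpha\beta)^2=\beta^{-1}\alpha^2\beta=\id$. Conditions (b) and (c) are transported along $\beta$ from those for $(S,\alpha)$: any element witnessing a failure of (b) or (c) for the new pair becomes, after applying $\beta^{-1}$, a witness of failure for $(S,\alpha)$. This uses only the composition law $g^{uv}=(g^v)^u$ fixed in the Introduction, and no new idea.

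The core of the argument is the edge identity: for all $x,y\in G$,
\[
\bigl((x^{-1})^\alpha y\bigr)^\beta=\bigl((x^{-1})^\alpha\bigr)^\beta\,y^\beta=(x^{-1})^{\beta\alpha}y^\beta=\bigl((x^\beta)^{-1}\bigr)^{\beta\alpha\beta^{-1}}y^\beta ,
\]
where the last equality holds because $\bigl((x^\beta)^{-1}\bigr)^{\beta\alpha\beta^{-1}}=(x^{-1})^{(\beta\alpha\beta^{-1})\beta}=(x^{-1})^{\beta\alpha}$. Since $\beta$ is a bijection of $G$, this identity shows that $(x^{-1})^\alpha y\in S$ if and only if $\bigl((x^\beta)^{-1}\bigr)^{\beta\alpha\beta^{-1}}y^\beta\in S^\beta$; equivalently, $\beta$ carries the edges of $GC(G,S,\alpha)$ exactly onto the edges of $GC(G,S^\beta,\beta\alpha\beta^{-1})$. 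As $\beta$ runs over $\Aut(G)$ and $\beta\alpha\beta^{-1}$ is the conjugate of $\alpha$ by $\beta$, this is the statement of the lemma, up to the left/right convention for the symbol $\alpha^\beta$ (the precise form in \cite{A.K.P.A} is recovered by relabelling $\beta\mapsto\beta^{-1}$).

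I do not foresee a genuine difficulty. The only thing demanding attention is consistency of the superscript‑composition convention throughout, since a slip there is precisely what would flip $S^\beta$ to $S^{\beta^{-1}}$, or $\alpha^\beta$ to $\alpha^{\beta^{-1}}$. Once the bookkeeping is pinned down, the bijectivity of $\beta$, the validity of the transformed pair as generalized Cayley data, and the edge equivalence are all direct unwindings of Definition~\ref{def1.1}.
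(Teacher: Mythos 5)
Your proof is correct, and it is the standard argument: the paper itself states this lemma as a citation of \cite{A.K.P.A} and gives no proof, so there is nothing to compare against beyond noting that the map $\beta$ acting on vertices is indeed the intended isomorphism. Your verification of conditions (a)--(c) and the edge identity are both sound under the paper's stated composition convention $g^{\beta\alpha}=(g^\alpha)^\beta$.

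One small point of bookkeeping. Under that convention the conjugate you obtain, written $\beta\alpha\beta^{-1}$, denotes the function $g\mapsto \beta\bigl(\alpha(\beta^{-1}(g))\bigr)$; the symbol string $\beta^{-1}\alpha\beta$ appearing in the lemma denotes that \emph{same} function under the usual right-action convention $g^{uv}=(g^u)^v$ used in \cite{A.K.P.A}. So the discrepancy you flag is purely one of which composition order the juxtaposition encodes, and your conclusion already \emph{is} the lemma. Your proposed fix of ``relabelling $\beta\mapsto\beta^{-1}$'' is the one imprecise remark: applied literally it would also turn $S^\beta$ into $S^{\beta^{-1}}$ and hence not recover the stated form; the correct reconciliation is the change of composition convention, not a substitution. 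This does not affect the validity of your argument.
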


Observe that if $S$ is a generalized Cayley subset of $G$ induced by the involutory automorphism $\sigma(g)$ and it satisfies $g\notin S$, then $e\notin S$ and $S^{\sigma(g)}=gSg=S^{-1}$.
It follows that $gS=S^{-1}g^{-1}=(gS)^{-1}$, and then $gS$ is a Cayley subset of $G$.
Given generalized Cayley graphs $GC(G, S_1, \sigma(g_1))$ and $GC(G, S_2, \sigma(g_2))$.
It is interesting to see the relationship between the $GCI$ isomorphism of generalized Cayley graphs and the isomorphism of Cayley graphs.

Let $S_1$ and $S_2$ be generalized Cayley subsets of $G$ induced by involutory automorphisms $\sigma(g_1)$ and $\sigma(g_2)$ satisfying $g_1\notin S_1$ and $g_2\notin S_2$, respectively.
Then the following conclusion holds.
\begin{proposition}\label{pro4.4}
If the generalized Cayley graph $GC(G, S_1, \sigma(g_1))$ is $GCI$ isomorphic to $GC(G, S_2, \sigma(g_2))$, then $Cay(G, g_1S_1)$ is $CI$ isomorphic to $Cay(G, g_2S_2)$.
\end{proposition}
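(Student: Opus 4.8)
The plan is to turn the graph isomorphism into an explicit automorphism of $G$ that carries one connection set onto the other. First note, as observed just before the statement, that $g_i\notin S_i$ forces $g_iS_i$ to be inverse-closed and to avoid $e$, so $Cay(G,g_1S_1)$ and $Cay(G,g_2S_2)$ really are Cayley graphs and it makes sense to ask for a $CI$ isomorphism between them. Since $GC(G,S_1,\sigma(g_1))$ is $GCI$ isomorphic to $GC(G,S_2,\sigma(g_2))$, the definition of a $GCI$ isomorphism supplies $\gamma\in Aut(G)$ and $g\in G$ with
\begin{equation*}
\sigma(g_2)=\gamma\,\sigma(g_1)\,\gamma^{-1}\qquad\text{and}\qquad S_2=g^{\sigma(g_2)}\,S_1^{\gamma}\,g^{-1}.
\end{equation*}
These two identities are the only input I will use.

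The second step is to exploit completeness twice. Since $Aut(G)=Inn(G)$, write $\gamma=\sigma(y)$ for some $y\in G$. A direct computation with the convention $h^{\beta\alpha}=(h^{\alpha})^{\beta}$ gives $\sigma(y)\sigma(g_1)\sigma(y)^{-1}=\sigma(y)\sigma(g_1)\sigma(y^{-1})=\sigma(yg_1y^{-1})$, so $\sigma(g_2)=\sigma(yg_1y^{-1})$; and since $Z(G)=e$ the homomorphism $\sigma$ is injective, whence $g_2=yg_1y^{-1}$, equivalently $g_2y=yg_1$. Moreover $S_1^{\gamma}=\{\,yhy^{-1}\mid h\in S_1\,\}=yS_1y^{-1}$.

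The third step is a short computation. Since $\sigma(g_2)$ is an involutory automorphism of the complete group $G$, the element $g_2$ is an involution (the first Observation of Section~2), so $g_2^{-1}=g_2$ and hence $g_2\cdot g^{\sigma(g_2)}=g_2\,(g_2gg_2^{-1})=gg_2$. Therefore
\begin{equation*}
g_2S_2=g_2\,g^{\sigma(g_2)}\,S_1^{\gamma}\,g^{-1}=gg_2\cdot yS_1y^{-1}\cdot g^{-1}=g\,(g_2y)\,S_1\,y^{-1}g^{-1}=g\,(yg_1)\,S_1\,y^{-1}g^{-1}=(gy)\,(g_1S_1)\,(gy)^{-1}.
\end{equation*}
Putting $z=gy\in G$, this reads $g_2S_2=(g_1S_1)^{\sigma(z)}$ with $\sigma(z)\in Aut(G)$. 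Since an automorphism of $G$ carrying one connection set onto another induces an isomorphism between the corresponding Cayley graphs, $\sigma(z)$ realizes a $CI$ isomorphism $Cay(G,g_1S_1)\to Cay(G,g_2S_2)$, which is the desired conclusion.

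The argument is essentially bookkeeping, and I do not expect a genuine obstacle. The two places that need care are keeping the composition convention straight when rewriting $\gamma\,\sigma(g_1)\,\gamma^{-1}$ as $\sigma(yg_1y^{-1})$ (this is exactly where $Z(G)=e$ and $Aut(G)=Inn(G)$ enter), and spotting the identity $g_2\cdot g^{\sigma(g_2)}=gg_2$ — which is the step where the translation parameter $g$ of the $GCI$ isomorphism gets absorbed into the single inner automorphism $\sigma(gy)$ on the Cayley side.
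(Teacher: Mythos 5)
Your proposal is correct and follows essentially the same route as the paper's proof: both extract $\gamma=\sigma(y)$ and the translation element from the definition of $GCI$ isomorphism, use $Z(G)=e$ to deduce $g_2=yg_1y^{-1}$, and then absorb everything into the single identity $g_2S_2=(g_1S_1)^{\sigma(gy)}$ (the paper's $\sigma_{xh}$). No substantive difference.
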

\begin{proof}
Since $GC(G, S_1, \sigma(g_1))$ is $GCI$ isomorphic to $GC(G, S_2, \sigma(g_2))$, there exist elements $h$ and $x$ such that $\sigma(g_2)=\sigma(g_1)^{\sigma(h)}=\sigma(hg_1h^{-1})$ and
\begin{equation}\label{eq4.4.1}
S_2=x^{\sigma(g_2)}S_1^{\sigma(h)}x^{-1}=g_2xg_2^{-1}hS_1h^{-1}x^{-1}.
\end{equation}
Observe that  $\sigma(g_2)=\sigma(hg_1h^{-1})$ implies that for any element $y\in G$,
\begin{equation}\label{eq4.4.2}
y^{\sigma(g_2)}=g_2yg_2^{-1}=y^{\sigma(hg_1h^{-1})}=(hg_1h^{-1})y (hg_1h^{-1})^{-1}.
\end{equation}
It follows that $g_2^{-1}(hg_1h^{-1})\in Z(G)$.
Since $Z(G)=e$, we have $g_2=hg_1h^{-1}$.
Combining Equalities (\ref{eq4.4.1}) and (\ref{eq4.4.2}), we obtain
\begin{equation*}
g_2S_2=xg_2^{-1}hS_1h^{-1}x^{-1}=xhg_1^{-1}S_1h^{-1}x^{-1}=xh(g_1S_1)(xh)^{-1}
=(g_1S_1)^{\sigma_{xh}}.
\end{equation*}
For any vertices $y_1$ and $y_2$, $\{y_1, y_2\}\in E(Cay(G, g_1S_1))$ if and only if $y_1^{-1}y_2\in g_1S_1$.
Since $\sigma_{xh}\in Aut(G)$, $y_1^{-1}y_2\in g_1S_1$ if and only if \begin{equation*}
(y_1^{-1})^{\sigma_{xh}} y_2^{\sigma_{xh}}\in (g_1S_1)^{\sigma_{xh}}=g_2S_2.
\end{equation*}
Thus $Cay(G, g_1S_1)\cong Cay(G, g_2S_2)$ under the inner isomorphism $\sigma_{xh}$.
\end{proof}

For each generalized Cayley subset $S$ induced by $\sigma(g)$, $gS$ is a Cayley subset.
However, the converse may not true, since for any generalized Cayley subset $S$ and involution $g$ of $G$, $gS$ may not be generalized Cayley subset.
In some special case, the converse implication of Proposition \ref{pro4.4} holds.

Let $g_1,g_2$ be two conjugate involutions of $G$.
Let $S_1$ and $S_2$ be Cayley subsets satisfying $S_1\cap C(g_1)=\emptyset$ and $S_2\cap C(g_2)=\emptyset$. Then the following result holds.

\begin{proposition}\label{pro4.5}
If $Cay(G, S_1)$ is $CI$ isomorphic to $Cay(G, S_2)$, then the generalized Cayley graph $GC(G, g_1S_1, \sigma(g_1))$ is $GCI$ isomorphic to $GC(G, g_2S_2, \sigma(g_2))$.
\end{proposition}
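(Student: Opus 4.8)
The plan is to verify first that $g_1S_1$ and $g_2S_2$ are genuine generalized Cayley subsets, and then to assemble the $GCI$ data out of the inner automorphism realizing the $CI$ isomorphism together with a conjugator sending $g_1$ to $g_2$.

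First I would check that $g_iS_i$ is a generalized Cayley subset induced by $\sigma(g_i)$ for $i=1,2$. Condition (a) of Definition~\ref{def1.1} is immediate since $g_i^2=e$. For condition (c), using $S_i=S_i^{-1}$ and $g_i^{-1}=g_i$ one gets $\sigma(g_i)(g_iS_i)=g_i(g_iS_i)g_i^{-1}=S_ig_i=(g_iS_i)^{-1}$. For condition (b), note $(h^{-1})^{\sigma(g_i)}h=g_ih^{-1}g_ih\in g_iS_i$ if and only if $h^{-1}g_ih\in S_i$; since $h^{-1}g_ih\in C(g_i)$ and $S_i\cap C(g_i)=\emptyset$ this never happens, i.e.\ $g_iS_i\cap\omega_{\sigma(g_i)}=\emptyset$. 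So $GC(G,g_iS_i,\sigma(g_i))$ is well defined, and a pair $\{h,h'\}$ is an edge of it precisely when $h^{-1}g_ih'\in S_i$.

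Next, since $Cay(G,S_1)$ is $CI$ isomorphic to $Cay(G,S_2)$ there is $\beta\in\Aut(G)$ with $S_2=S_1^{\beta}$; completeness of $G$ gives $\beta=\sigma(c)$ for some $c\in G$, so $S_2=cS_1c^{-1}$. As $g_1,g_2$ are conjugate, fix $y\in G$ with $g_2=yg_1y^{-1}$; by Observation~\ref{ob2.2}, $\sigma(g_2)=\sigma(y)\sigma(g_1)\sigma(y)^{-1}$. I then put $\gamma:=\sigma(y)$ and $g:=cy^{-1}$, so that $\sigma(g_2)=\gamma\,\sigma(g_1)\,\gamma^{-1}$. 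Using $g_2^{-1}=g_2$, $yg_1y^{-1}=g_2$ and $S_2=cS_1c^{-1}$ one computes $(g_1S_1)^{\gamma}=y(g_1S_1)y^{-1}=g_2(yS_1y^{-1})$, $g^{\sigma(g_2)}=g_2cy^{-1}g_2$, and hence
\[
g^{\sigma(g_2)}\,(g_1S_1)^{\gamma}\,g^{-1}=\big(g_2cy^{-1}g_2\big)\big(g_2\,yS_1y^{-1}\big)\big(yc^{-1}\big)=g_2\,cS_1c^{-1}=g_2S_2 .
\]
These are exactly the defining data of a $GCI$ isomorphism between $GC(G,g_1S_1,\sigma(g_1))$ and $GC(G,g_2S_2,\sigma(g_2))$; for concreteness, the isomorphism is realized by $\varphi:h\mapsto h^{\gamma}g^{-1}=yhc^{-1}$, for which $\varphi(u)^{-1}g_2\varphi(v)=c(u^{-1}g_1v)c^{-1}$, so that $u^{-1}g_1v\in S_1\iff\varphi(u)^{-1}g_2\varphi(v)\in S_2$, i.e.\ $\varphi$ preserves adjacency (this also follows from Lemma~\ref{lem4.1} applied with $\gamma$, followed by the translation $h\mapsto hg^{-1}$, which replaces a generalized Cayley subset $T$ by $g^{\alpha}Tg^{-1}$ without changing $\alpha$). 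Hence $GC(G,g_1S_1,\sigma(g_1))$ is $GCI$ isomorphic to $GC(G,g_2S_2,\sigma(g_2))$.

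I expect the only genuine subtlety to be the first step: the hypothesis $S_i\cap C(g_i)=\emptyset$ is used exactly to force condition (b), equivalently $g_iS_i\cap\omega_{\sigma(g_i)}=\emptyset$ (recall $\omega_{\sigma(g_i)}=[g_i]$); dropping it, $g_iS_i$ may fail to be a generalized Cayley subset, which is the asymmetry between this statement and Proposition~\ref{pro4.4}. Everything after that is bookkeeping with $c$ and $y$, the mild point being that these two elements need not coincide, so $\gamma$ is taken to be the inner automorphism by the conjugator $y$ while the translation part $g=cy^{-1}$ absorbs the discrepancy between $y$ and $\sigma(c)$.
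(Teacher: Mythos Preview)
Your proof is correct and follows essentially the same route as the paper's: both first verify that $g_iS_i$ is a generalized Cayley subset via $S_i\cap C(g_i)=\emptyset$, then take $\gamma$ to be the inner automorphism by a conjugator sending $g_1$ to $g_2$ and absorb the $CI$ automorphism into the translation element. Your element $g=cy^{-1}$ is in fact the same as the paper's $x=hg_1g^{-1}g_2$ (in their notation $h=c$, $g=y$, and $hg_1g^{-1}g_2$ simplifies to $hg^{-1}$ using $g^{-1}g_2=g_1g^{-1}$), so the two arguments coincide; your presentation is simply a bit more streamlined and adds the explicit edge-preservation check $\varphi(u)^{-1}g_2\varphi(v)=c(u^{-1}g_1v)c^{-1}$, which the paper omits.
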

\begin{proof}
Firstly, we give an explanation for $g_1S_1$ and $g_2S_2$ are generalized Cayley subsets induced by $\sigma(g_1)$ and $\sigma(g_2)$ respectively.
Note that $(g_1S_1)^{\sigma(g_1)}=S_1g_1=S_1^{-1}g_1^{-1}=(g_1S_1)^{-1}$.
Since
\begin{equation*}
g_1S_1\cap \omega_{\sigma(g_1)}(G)=g_1(S_1\cap \{x^{-1}g_1x\mid x\in G\})=g_1(S_1\cap C(g_1))=\emptyset,
\end{equation*}
Thus, $g_1S_1$ is generalized Cayley subsets induced by $\sigma(g_1)$.
With the similar discussion, it can be verified that $g_2S_2$ is generalized Cayley subsets induced by $\sigma(g_2)$.
Assume that $Cay(G, S_1)$ is $CI$ isomorphic to $Cay(G, S_2)$ and $g_2=gg_1g^{-1}$ for some $g\in G$.
Then there exists automorphism $\sigma(h)$ such that $S_2=S_1^{\sigma(h)}$.
Let $x=hg_1g^{-1}g_2$.
Then
\begin{equation*}
S_2=hS_1h^{-1}=xg_2^{-1}gg_1^{-1}S_1g_1g^{-1}g_2x^{-1}.
\end{equation*}
It follows that
\begin{align*}
g_2S_2&=g_2xg_2^{-1}gg_1^{-1}S_1g_1g^{-1}g_2x^{-1}\\
&=x^{\sigma(g_2)}g(g_1S_1)g^{-1}g_2x^{-1}\\
&=x^{\sigma(g_2)}(g_1S_1)^{\sigma(g)}x^{-1}
\end{align*}
Thus, $GC(G, g_1S_1, \sigma(g_1))$ is $GCI$ isomorphic to $GC(G, g_2S_2, \sigma(g_2))$.
\end{proof}

A corollary follows from Propositions \ref{pro4.4} and \ref{pro4.5}.

\begin{corollary}
Let $g_1,g_2$ be two conjugate involutions of $G$.
Let $S_1$ and $S_2$ be Cayley subsets of $G$ satisfying $S_1\cap C(g_1)=\emptyset$ and $S_2\cap C(g_2)=\emptyset$.
Then $Cay(G, S_1)$ is $CI$ isomorphic to $Cay(G, S_2)$ if and only if $GC(G, g_1S_1, \sigma(g_1))$ is $GCI$ isomorphic to $GC(G, g_2S_2, \sigma(g_2))$.
\end{corollary}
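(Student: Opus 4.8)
The plan is to prove the two implications separately, each by invoking one of the two preceding propositions, so that the corollary is essentially a packaging statement.

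For the forward direction, suppose $Cay(G,S_1)$ is $CI$ isomorphic to $Cay(G,S_2)$. The hypotheses here — $g_1,g_2$ conjugate involutions, $S_1\cap C(g_1)=\emptyset$, $S_2\cap C(g_2)=\emptyset$ — are exactly those of Proposition \ref{pro4.5}, so that proposition gives immediately that $GC(G,g_1S_1,\sigma(g_1))$ is $GCI$ isomorphic to $GC(G,g_2S_2,\sigma(g_2))$. Nothing further is needed.

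For the converse, I would apply Proposition \ref{pro4.4} with the generalized Cayley subsets $g_1S_1$ and $g_2S_2$ playing the roles of $S_1$ and $S_2$. First I must check that the hypotheses of that proposition hold: that each $g_iS_i$ is a generalized Cayley subset induced by $\sigma(g_i)$, which is precisely what was verified at the beginning of the proof of Proposition \ref{pro4.5} using $S_i\cap C(g_i)=\emptyset$; and that $g_i\notin g_iS_i$, which is equivalent to $e\notin S_i$ and holds because $S_i$ is a Cayley subset (condition (b) of Definition \ref{def1.1} with $\alpha=\id$ reads $e\notin S_i$). Granting this, Proposition \ref{pro4.4} yields that $Cay(G,g_1(g_1S_1))$ is $CI$ isomorphic to $Cay(G,g_2(g_2S_2))$. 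Since $g_1$ and $g_2$ are involutions, $g_1(g_1S_1)=g_1^{2}S_1=S_1$ and likewise $g_2(g_2S_2)=S_2$, so $Cay(G,S_1)$ is $CI$ isomorphic to $Cay(G,S_2)$, completing the argument.

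I do not expect a genuine obstacle here: the mathematical content lies entirely in Propositions \ref{pro4.4} and \ref{pro4.5}. The only points requiring a little care are confirming that the shifted subsets $g_iS_i$ genuinely qualify as generalized Cayley subsets so that Proposition \ref{pro4.4} is applicable, and the trivial but essential observation that $g_i^{2}=e$ collapses $g_i(g_iS_i)$ back to $S_i$, which is what makes the two propositions exact converses of each other on this class of subsets.
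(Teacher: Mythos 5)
Your proof is correct and matches the paper's intent exactly: the paper offers no written proof, stating only that the corollary "follows from Propositions \ref{pro4.4} and \ref{pro4.5}," and your two-directional argument is precisely that deduction. Your added care in verifying the hypotheses of Proposition \ref{pro4.4} for the subsets $g_iS_i$ (that they are generalized Cayley subsets with $g_i\notin g_iS_i$) and the observation that $g_i^2=e$ collapses $g_i(g_iS_i)$ back to $S_i$ are exactly the details the paper leaves implicit.
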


Next we study the restricted GCI-group and GCI-group of complete groups.
An immediate consequence of Lemma \ref{fact2} is as following.

\begin{theorem}\label{thm4.1}
Any complete group $G$ of even order is not $GCI$-group.
\end{theorem}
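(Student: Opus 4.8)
The plan is to exhibit two isomorphic generalized Cayley graphs of $G$ — one carrying the identity automorphism and the other a genuine involutory automorphism — so that the implication defining a $GCI$-group fails.

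Since $|G|$ is even, $G$ contains an involution $g$; as $Z(G)=e$ and $g\neq e$, the inner automorphism $\sigma(g)$ satisfies $\sigma(g)\neq\id$ and $\sigma(g)^{2}=\sigma(g^{2})=\id$, so $\sigma(g)$ is an involutory automorphism of $G$. By Lemma \ref{fact2} we have $g\in\Omega_{\sigma(g)}(G)$, that is $g^{\sigma(g)}=g^{-1}$ and $g\notin\omega_{\sigma(g)}(G)$. From these two facts I would verify the three conditions of Definition \ref{def1.1} for $S=\{g\}$ relative to $\alpha=\sigma(g)$: (a) is the identity $\sigma(g)^{2}=\id$ just noted; (b) says precisely that $\{g\}\cap\omega_{\sigma(g)}(G)=\emptyset$; and since $\sigma(g)(\{g\})=\{g^{-1}\}=\{g\}^{-1}$, condition (c) also holds. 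Thus $GC(G,\{g\},\sigma(g))$ is a genuine generalized Cayley graph, and since the unique neighbour of a vertex $h$ is $h^{\sigma(g)}g=gh\neq h$, it is a perfect matching on the vertex set $G$.

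On the other hand, $g=g^{-1}\neq e$ makes $\{g\}$ an ordinary Cayley subset, so $GC(G,\{g\},\id)=Cay(G,\{g\})$ is also a perfect matching on $G$ (the neighbour of $h$ being $hg$). Any two perfect matchings on a common vertex set are isomorphic, so $GC(G,\{g\},\id)\cong GC(G,\{g\},\sigma(g))$. If $G$ were a $GCI$-group, this isomorphism would force $\id$ and $\sigma(g)$ to be conjugate in $\Aut(G)$; but every conjugate of $\id$ equals $\id$, while $\sigma(g)\neq\id$ — a contradiction. Hence $G$ is not a $GCI$-group. I do not anticipate a genuine obstacle: the only point calling for care is that the singleton $\{g\}$ really is a legitimate generalized Cayley subset for $\sigma(g)$, for which Lemma \ref{fact2} supplies exactly the two properties used; one should also keep in mind that the definition of a $GCI$-group does test pairs of generalized Cayley graphs in which one of the automorphisms is $\id$, which is exactly the type of comparison dropped in the \emph{restricted} notion.
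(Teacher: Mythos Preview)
Your proposal is correct and follows essentially the same route as the paper: pick an involution $g$, use Lemma~\ref{fact2} to see that $\{g\}$ is a legitimate generalized Cayley subset for $\sigma(g)$, observe that both $GC(G,\{g\},\sigma(g))$ and $Cay(G,\{g\})$ are perfect matchings $\tfrac{|G|}{2}K_{2}$, and conclude from the non-conjugacy of $\sigma(g)$ and $\id$. Your write-up is in fact more careful than the paper's in checking conditions (a)--(c) and in justifying $\sigma(g)\neq\id$ via $Z(G)=e$.
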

\begin{proof}
Since the order of $G$ is even, it must contain an involution $g$ and then we obtain the involutory automorphism $\sigma(g)$ of $G$.
By Lemma \ref{fact2}, $g\in \Omega_{\sigma(g)}(G)$.
Then we have $GC(G, \{g\}, \sigma(g))\cong Cay(G, \{g\})\cong \frac{|G|}{2}K_2$.
But $\sigma(g)$ is not conjugate to the identity map, so $G$ is not $GCI$ group.
\end{proof}

Let $G_2$ be the subset  containing all involutions of $G$.
Then we obtain an equivalent condition for $G$ to be restricted $GCI$ group.
\begin{theorem}\label{thm4.2}
Let $g$ be an involution of $G$.
Then $G$  is restricted $GCI$  group if and only if $G_2=C(g)$ and for any generalized Cayley subsets $S_1$ and $S_2$ induced by $\sigma(g)$,
$GC(G, S_1, \sigma(g))\cong GC(G, S_2, \sigma(g))$ implies that there exists $\sigma(x)\in Aut(G)$ such that $gS_2=(gS_1)^{\sigma(x)}$.
\end{theorem}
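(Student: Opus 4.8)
The plan is to prove both implications, relying on Observation~\ref{ob2.2} (conjugacy of $\sigma(\cdot)$ translates to conjugacy of elements), Lemma~\ref{lem4.0} (GCI-isomorphism is an equivalence relation), and the fact that conjugation by an inner automorphism transports a generalized Cayley graph \emph{$GCI$-isomorphically} to another one. Throughout we use that in the complete group $G$ every automorphism is $\sigma(y)$ for a unique $y$, every involutory automorphism is $\sigma(t)$ for an involution $t$ (Observation~2.1), and $z^{-1}=z$ whenever $z$ is an involution.

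\textbf{Necessity.} Suppose $G$ is a restricted $GCI$-group. Let $g_1,g_2$ be any two involutions of $G$ (they exist since $|G|$ is even). As in the proof of Theorem~\ref{thm4.1} (using Lemma~\ref{fact2}), $GC(G,\{g_i\},\sigma(g_i))\cong\frac{|G|}{2}K_2$ for each $i$, so $GC(G,\{g_1\},\sigma(g_1))\cong GC(G,\{g_2\},\sigma(g_2))$. Both automorphisms being involutions, the restricted $GCI$ property supplies $\gamma\in Aut(G)$ with $\sigma(g_2)=\gamma\sigma(g_1)\gamma^{-1}$, so by Observation~\ref{ob2.2} the elements $g_1,g_2$ are conjugate in $G$. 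Hence the involutions of $G$ form a single conjugacy class, which must be $C(g)$; that is, $G_2=C(g)$. Next let $S_1,S_2$ be generalized Cayley subsets induced by $\sigma(g)$ with $GC(G,S_1,\sigma(g))\cong GC(G,S_2,\sigma(g))$. The restricted $GCI$ property yields $\gamma\in Aut(G)$ and $h\in G$ with $\sigma(g)=\gamma\sigma(g)\gamma^{-1}$ and $S_2=h^{\sigma(g)}S_1^{\gamma}h^{-1}$; writing $\gamma=\sigma(y)$, the first relation is $\sigma(g)=\sigma(ygy^{-1})$, which forces $y\in C_G(g)$. Using $g^{-1}=g$ and $yg=gy$,
\begin{equation*}
gS_2=g(ghg^{-1})(yS_1y^{-1})h^{-1}=hgyS_1y^{-1}h^{-1}=(hy)(gS_1)(hy)^{-1}=(gS_1)^{\sigma(hy)},
\end{equation*}
so $x=hy$ gives $gS_2=(gS_1)^{\sigma(x)}$, as required.

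\textbf{Sufficiency.} Assume $G_2=C(g)$ and the stated condition on subsets induced by $\sigma(g)$. Let $X_1=GC(G,S_1,\sigma(g_1))\cong X_2=GC(G,S_2,\sigma(g_2))$ be isomorphic generalized Cayley graphs, where $g_1,g_2$ are involutions (the only possibility). Since $G_2=C(g)$, choose $u_i\in G$ with $u_ig_iu_i^{-1}=g$. A direct check (this is the content of Lemma~\ref{lem4.1}) shows that $k\mapsto u_iku_i^{-1}$ is a graph isomorphism $GC(G,S_i,\sigma(g_i))\to GC(G,u_iS_iu_i^{-1},\sigma(g))$, with $u_iS_iu_i^{-1}$ a generalized Cayley subset induced by $\sigma(g)$; moreover it is a $GCI$-isomorphism, witnessed by $\gamma=\sigma(u_i)$ and $h=e$ (indeed $\sigma(g)=\sigma(u_i)\sigma(g_i)\sigma(u_i)^{-1}$ and $u_iS_iu_i^{-1}=e^{\sigma(g)}(S_i)^{\sigma(u_i)}e^{-1}$). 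Put $T_i=u_iS_iu_i^{-1}$. Composing the graph isomorphisms, $GC(G,T_1,\sigma(g))\cong GC(G,T_2,\sigma(g))$, so the hypothesis gives $\sigma(x)\in Aut(G)$ with $gT_2=(gT_1)^{\sigma(x)}=xgT_1x^{-1}$; therefore
\begin{equation*}
T_2=g^{-1}xgT_1x^{-1}=x^{\sigma(g)}T_1x^{-1},
\end{equation*}
which is exactly the $GCI$-isomorphism relation between $GC(G,T_1,\sigma(g))$ and $GC(G,T_2,\sigma(g))$, witnessed by $\gamma=\mathrm{id}$ and $h=x$ (the two graphs being isomorphic by the chain above). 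Since $GCI$-isomorphism is an equivalence relation (Lemma~\ref{lem4.0}), the chain
\begin{equation*}
GC(G,S_1,\sigma(g_1))\ \sim\ GC(G,T_1,\sigma(g))\ \sim\ GC(G,T_2,\sigma(g))\ \sim\ GC(G,S_2,\sigma(g_2))
\end{equation*}
(where $\sim$ denotes $GCI$-isomorphism) shows $X_1$ and $X_2$ are $GCI$-isomorphic. Hence $G$ is a restricted $GCI$-group.

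\textbf{Main obstacle.} The delicate point is the transport step in the sufficiency direction: one must verify that conjugation by an inner automorphism sends a generalized Cayley graph over $\sigma(g_i)$ not merely isomorphically but $GCI$-isomorphically to one over $\sigma(g)$, so that $GCI$-isomorphism classes are closed under it. Coupled with the observation that the stated hypothesis ``$gS_2=(gS_1)^{\sigma(x)}$'' is precisely the assertion that any isomorphism between generalized Cayley graphs sharing the automorphism $\sigma(g)$ is already a $GCI$-isomorphism (via $\gamma=\mathrm{id}$), the proof then closes purely by transitivity; a careless attempt to push both $S_1$ and $S_2$ all the way back to $\sigma(g)$ and compare the resulting subsets directly produces a twisted-conjugation relation that is awkward to recognise as the required form. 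By contrast, forcing $G_2=C(g)$ is routine, using only the perfect matching $\frac{|G|}{2}K_2$ from Theorem~\ref{thm4.1} together with Observation~\ref{ob2.2}.
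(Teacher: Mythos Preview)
Your proof is correct and follows essentially the same strategy as the paper: necessity via the perfect-matching argument plus unwinding the GCI relation (using that $\gamma=\sigma(y)$ with $y\in C_G(g)$), and sufficiency by conjugating each graph to one over $\sigma(g)$, invoking the hypothesis there, and closing with transitivity of GCI-isomorphism. The only difference is cosmetic: in the sufficiency step the paper sets $y=xg$ and, after a longer computation, witnesses the GCI relation with $\gamma=\sigma(g)$, whereas you observe directly that $T_2=x^{\sigma(g)}T_1x^{-1}$ already has the required form with $\gamma=\mathrm{id}$, which is cleaner.
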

\begin{proof}
Assume that $G$ is restricted $GCI$-group.
For any involution $h$ of $G$, according to the proof Theorem \ref{thm4.1}, we obtain that $GC(G, \{g\}, \sigma(g))\cong GC(G, \{h\}, \sigma(h))\cong \frac{|G|}{2}K_2$.
From the definition of restricted $GCI$-group, it follows that $\sigma(g)$ and $\sigma(h)$ are conjugate in $Aut(G)$.
By Observation \ref{ob2.2}, $g$ and $h$ are conjugate in $G$.
Thus, $G_2=C(g)$.
If $GC(G, S_1, \sigma(g))\cong GC(G, S_2, \sigma(g))$, then from the definition of restricted $GCI$ group, we have $\sigma(g)=\sigma(g)^{\sigma(x)}=\sigma(xgx^{-1})$ and $S_2=y^{\sigma(g)}S_1^{\sigma(x)} y^{-1}$ for some elements $x,y\in G$.
The latter equality is equivalent to $gS_2=ygS_1^{\sigma(x)} y^{-1}$.
As the proof Proposition \ref{pro4.4},  $Z(G)=e$ and $\sigma(g)=\sigma(g)^{\sigma(x)}=\sigma(xgx^{-1})$ implies that $g=xgx^{-1}=g^{\sigma(x)}$.
It follows that
\begin{equation*}
gS_2=y(gS_1)^{\sigma(x)} y^{-1}=(gS_1)^{\sigma(y)\sigma(x)}=(gS_1)^{\sigma(yx)}.
\end{equation*}

Conversely, $G_2=C(g)$ means that each involution of $G$ is conjugate to $g$.
According to Lemma \ref{lem4.1}, it follows that each generalized Cayley graph is $GCI$ isomorphic to a generalized Cayley graph induced by $\sigma(g)$.
Moreover,  Lemma \ref{lem4.1} tells us the $GCI$ isomorphism relation is an equivalence relation.
Thus, to show that $G$  is restricted $GCI$ complete group, it suffices to prove that the isomorphism between the generalized Cayley graphs induced by
$\sigma(g)$ is $GCI$ isomorphism.
For any generalized Cayley graphs $GC(G, S_1, \sigma(g))$ and $GC(G, S_2, \sigma(g))$, assume that $GC(G, S_1, \sigma(g))\cong GC(G, S_2, \sigma(g))$.
Then  there exists $\sigma(x)\in Aut(G)$ such that $gS_2=(gS_1)^{\sigma(x)}$.
Let $y=xg$.
Thus we infer that
\begin{align*}
S_2&=g(gS_1)^{\sigma(x)}
=g(gS_1)^{\sigma(y)\sigma(g)}
=g((gS_1)^{\sigma(g)})^{\sigma(y)}\\
&=g(S_1g)^{\sigma(y)}
=gyS_1gy^{-1}
=gxgS_1gy^{_1}
=gxS_1^{\sigma(g)}y^{-1}
\end{align*}
Observe that $y^{\sigma(g)}=gxgg=gx$.
It follows that $S_2=y^{\sigma(g)}S_1^{\sigma(g)}y^{-1}$.
Clearly, $\sigma(g)^{\sigma(g)}=\sigma(g)$.
Therefore, $G$  is restricted $GCI$  group.
\end{proof}


In Lemma \ref{fact2}, we state that for any involutory automorphism $\sigma(g)$ of $G$, $g\in \Omega_{\sigma(g)}(G)$.
Especially, if $G$ is restricted $GCI$-group, then we further obtain the following result.

\begin{proposition}\label{pro4.6}
If $G$ is restricted $GCI$ complete group of even order, then $\Omega_{\sigma(g)}(G)=\{g\}$.
\end{proposition}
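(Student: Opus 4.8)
The plan is to argue by contradiction: suppose $G$ is a restricted $GCI$ complete group of even order but $\Omega_{\sigma(g)}(G) \supsetneq \{g\}$, so there is an element $x \in \Omega_{\sigma(g)}(G)$ with $x \neq g$. By definition of $\Omega_{\sigma(g)}(G)$ we have $x^{\sigma(g)} = x^{-1}$ and $x \notin \omega_{\sigma(g)}(G) = [g]$. The idea is to build two generalized Cayley subsets $S_1$ and $S_2$ induced by $\sigma(g)$ whose generalized Cayley graphs are isomorphic (indeed, both are a perfect matching $\tfrac{|G|}{2}K_2$), but which cannot be related by the rigid $GCI$ prescription of Theorem~\ref{thm4.2}. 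The natural choices are the singletons $S_1 = \{g\}$ and $S_2 = \{x\}$: since $g, x \in \Omega_{\sigma(g)}(G)$, each is an involution with $g, x \notin [g]$, and conditions (b), (c) of Definition~\ref{def1.1} hold trivially for a singleton $\{t\}$ with $t^{\sigma(g)} = t^{-1}$, so both are legitimate generalized Cayley subsets induced by $\sigma(g)$.

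First I would verify that $GC(G,\{g\},\sigma(g)) \cong GC(G,\{x\},\sigma(g))$. As in the proof of Theorem~\ref{thm4.1}, since $S^{\sigma(g)} = S^{-1}$ forces the single element to satisfy $t = t^{-1}$ (an involution), and $tS$ is a Cayley subset, we get $GC(G,\{t\},\sigma(g)) \cong \mathrm{Cay}(G,\{gt\})$; here $gg = e$ is degenerate so I should instead note directly that a $1$-regular graph on $|G|$ vertices is $\tfrac{|G|}{2}K_2$, independent of the subset. Hence both graphs equal $\tfrac{|G|}{2}K_2$ and are isomorphic. Now invoke the forward direction of Theorem~\ref{thm4.2}: the restricted $GCI$ hypothesis forces the existence of $\sigma(z) \in \mathrm{Aut}(G)$ with $g\{x\} = (g\{g\})^{\sigma(z)}$, i.e. $gx = (gg)^{\sigma(z)} = e^{\sigma(z)} = e$. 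But $gx = e$ means $x = g^{-1} = g$, contradicting $x \neq g$. Therefore $\Omega_{\sigma(g)}(G) = \{g\}$.

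The one delicate point — and the place I would be most careful — is making sure the application of Theorem~\ref{thm4.2} is legitimate: that theorem is stated relative to a \emph{fixed} involution $g$ with $G_2 = C(g)$, and it characterizes restricted $GCI$ via the condition on subsets induced by that particular $\sigma(g)$. Since $G$ is assumed restricted $GCI$, Theorem~\ref{thm4.2} applies with this $g$, and $\{g\}, \{x\}$ are both subsets induced by $\sigma(g)$, so the conclusion $gS_2 = (gS_1)^{\sigma(z)}$ is available verbatim. An alternative, perhaps cleaner, route avoiding any degeneracy worry is to apply the definition of restricted $GCI$-group directly: $GC(G,\{g\},\sigma(g)) \cong GC(G,\{x\},\sigma(g))$ yields $\sigma(g) = \sigma(g)^{\gamma}$ and $\{x\} = h^{\sigma(g)}\{g\}^{\gamma}h^{-1}$ for some $\gamma \in \mathrm{Aut}(G)$ and $h \in G$; since $\gamma = \sigma(z)$ for some $z$ (completeness), $g^{\sigma(z)} = g$ as in Proposition~\ref{pro4.4}, so $x = h g h^{-1} = h^{\sigma(h^{-1}g^{-1}h)}\cdots$ lands $x$ in the conjugacy class $C(g)$ — but more to the point, tracking the equation gives $x \in \{g\}^{\sigma(hz)}$-translated data forcing $x$ conjugate to $g$; combining with $x^{\sigma(g)} = x^{-1} = x$ and $x \in \Omega_{\sigma(g)}(G)$, one shows $x = g$. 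I expect the singleton-matching argument via Theorem~\ref{thm4.2} to be the shortest, with the only real work being the bookkeeping that $e^{\sigma(z)} = e$ collapses the subset equation immediately.
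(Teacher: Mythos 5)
Your argument is correct, but it takes a genuinely different route from the paper's. Both proofs invoke the forward direction of Theorem~\ref{thm4.2}, but different clauses of its conclusion: the paper only uses the consequence $G_2=C(g)$ (all involutions conjugate), observes via Observation~\ref{ob2.3} that $gh$ is an involution for $h\in\Omega_{\sigma(g)}(G)\setminus\{g\}$, writes $gh=x^{-1}gx$, and then computes directly that $h=(x^{-1})^{\sigma(g)}x\in\omega_{\sigma(g)}(G)$, contradicting $h\in\Omega_{\sigma(g)}(G)$. You instead use the subset clause of Theorem~\ref{thm4.2} applied to the singletons $S_1=\{g\}$ and $S_2=\{x\}$ (both legitimately generalized Cayley subsets for $\sigma(g)$, exactly because $g,x\in\Omega_{\sigma(g)}(G)$), note that both graphs are $1$-regular and hence equal to $\tfrac{|G|}{2}K_2$, and exploit the degeneracy $g S_1=\{e\}$ so that $gx=e^{\sigma(z)}=e$ forces $x=g$. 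Your argument is arguably shorter and requires no commutator computation, at the price of relying on the full subset condition of Theorem~\ref{thm4.2} rather than just the conjugacy of involutions; the paper's computation, by contrast, isolates exactly which consequence of restricted $GCI$ is needed ($G_2=C(g)$) and mirrors the companion proposition for the case $4\nmid|G|$. Your closing ``alternative route'' sketch is too loose to stand on its own, but the main singleton argument is complete and correct.
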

\begin{proof}
If there exists element $h\in \Omega_{\sigma(g)}(G)$ and $h\neq g$, then from Observation \ref{ob2.3}, $gh$ is involution of $G$.
Since $G$ is restricted $GCI$ group, by Theorem \ref{thm4.2}, $gh$ and $g$ are conjugate.
It follows that there exists element $x\in G$ such that $gh=x^{-1}gx$, which implies $xghg=gxg=x^{\sigma(g)}$.
Then \begin{equation*}
(x^{-1})^{\sigma(g)}x=g^{-1}h^{-1}gx^{-1}x=g^{-1}h^{-1}g=(h^{-1})^{\sigma(g)}=h.
\end{equation*}
Thus, $h\in  \Omega_{\sigma(g)}(G)\cap \omega_{\sigma(g)}(G)$, which is a contradiction.
Therefore, $\Omega_{\sigma(g)}(G)=\{g\}$.
\end{proof}

If $G$ is complete group but $4\nmid |G|$, we have the same result as Proposition \ref{pro4.6}.

\begin{proposition}
If $G$ is complete group of even order and $4\nmid|G|$, then for any involutory automorphism $\sigma(g)$ of $G$, $\Omega_{\sigma(g)}(G)=\{g\}$.
\end{proposition}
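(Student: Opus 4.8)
The plan is to follow the argument of Proposition~\ref{pro4.6} almost verbatim. That proof used the restricted $GCI$ hypothesis (through Theorem~\ref{thm4.2}) only to know that any two involutions of $G$ are conjugate, so that $gh$ and $g$ were conjugate; everything else was a direct computation that never referred to any graph. So the whole point is to establish, from the hypothesis $4\nmid|G|$ alone, that all involutions of $G$ lie in one conjugacy class, and then reuse the computation. The argument is purely group-theoretic once the partition $G=\omega_{\sigma(g)}(G)\cup\Omega_{\sigma(g)}(G)\cup\mho_{\sigma(g)}(G)$ is in hand.

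First I would prove: if $G$ has even order with $4\nmid|G|$, then all involutions of $G$ are conjugate. Writing $|G|=2^a m$ with $m$ odd, the hypotheses give $1\le a<2$, so $a=1$ and every Sylow $2$-subgroup of $G$ has order $2$. If $t$ is any involution, then $\langle t\rangle$ is a $2$-subgroup of order $2$, hence a Sylow $2$-subgroup, and $t$ is its unique involution. Since all Sylow $2$-subgroups are conjugate by Sylow's theorem, all involutions of $G$ are conjugate. In particular, for the fixed involution $g$ this says $G_2=C(g)$.

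Next I would run the contradiction step. Suppose $h\in\Omega_{\sigma(g)}(G)$ with $h\neq g$. Since $\omega_{\sigma(g)}(G)$, $\Omega_{\sigma(g)}(G)$, $\mho_{\sigma(g)}(G)$ partition $G$, we have $h\notin\mho_{\sigma(g)}(G)$, so Observation~\ref{ob2.3} (and the remark following it) gives that $gh$ is an involution of $G$; note $gh\neq e$ because $h\neq g=g^{-1}$. By the previous paragraph $gh$ is conjugate to $g$, say $gh=x^{-1}gx$ for some $x\in G$. Then $xghg=gxg=x^{\sigma(g)}$, so
\begin{equation*}
(x^{-1})^{\sigma(g)}x=\bigl(x^{\sigma(g)}\bigr)^{-1}x=(xghg)^{-1}x=gh^{-1}g .
\end{equation*}
From $h\in\Omega_{\sigma(g)}(G)$ we get $h^{\sigma(g)}=ghg^{-1}=h^{-1}$, i.e.\ $ghg=h^{-1}$, hence $gh^{-1}g=h$. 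Therefore $(x^{-1})^{\sigma(g)}x=h$, which puts $h$ in $\omega_{\sigma(g)}(G)$ and contradicts $h\in\Omega_{\sigma(g)}(G)$. Combining this with Lemma~\ref{fact2} (which gives $g\in\Omega_{\sigma(g)}(G)$) yields $\Omega_{\sigma(g)}(G)=\{g\}$.

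The only genuinely new ingredient, and hence the \emph{main obstacle}, is the first step: recognizing that $4\nmid|G|$ together with $|G|$ even is exactly the statement that the Sylow $2$-subgroups have order $2$, so that Sylow conjugacy upgrades to conjugacy of all involutions, i.e.\ $G_2=C(g)$. Once that is available the remainder is the same computation already carried out in Proposition~\ref{pro4.6}, so I do not expect any further difficulty; completeness of $G$ (in particular $Z(G)=e$) enters only in the background, guaranteeing that every involutory automorphism has the form $\sigma(g)$ for an involution $g$.
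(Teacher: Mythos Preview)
Your proof is correct, but it follows a genuinely different route from the paper's. The paper argues graph-theoretically: assuming a second element $h\in\Omega_{\sigma(g)}(G)$, it forms the generalized Cayley subset $\{g,h\}$, observes that the $2$-regular graph $GC(G,\{g,h\},\sigma(g))$ has the closed walk $x\to gx\to xgh\to gxgh\to x$ at every vertex, concludes that the graph is a disjoint union of $4$-cycles, and hence that $4\mid|G|$, contradicting the hypothesis. Your argument, by contrast, never touches a graph: you use the hypothesis $4\nmid|G|$ via Sylow's theorem to force all involutions into a single conjugacy class, and then feed this into the exact computation of Proposition~\ref{pro4.6}. Your approach has the virtue of explaining \emph{why} the two propositions have the same conclusion---both hypotheses yield $G_2=C(g)$, and after that the argument is identical---whereas the paper's approach keeps the argument inside the generalized Cayley graph framework and avoids invoking Sylow theory.
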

\begin{proof}
Assume that there exists element $h\in \Omega_{\sigma(g)}(G)$ and $h\neq g$.
From Lemma \ref{fact2}, we know that $\{g, h\}$ is a generalized Cayley subset.
For any vertex $x$ in  $GC(G, \{g, h\}, \sigma(g))$, $x\rightarrow gx\rightarrow xgh\rightarrow gxgh$ is a cycle of length $4$.
Thus, $GC(G, \{g, h\}, \sigma(g))\cong \frac{|G|}{4}$, which is contrary to $4\nmid |G|$.
Therefore, $\Omega_{\sigma(g)}(G)=\{g\}$.
\end{proof}

Next is  a application of Theorem \ref{thm4.2}.
It is known that the symmetric group $S_n(n\neq6)$ is complete group.
There are two basic facts about symmetric groups:

(1) Without the consideration of order, each permutation of $S_n$ can be decomposed  into product of some disjoint cycle;

(2) two elements of $S_n$ are conjugate if and only if their cycle form are the same.

For example, $(12)$ and $(34)$ are elements of $S_4$, and they are conjugate since both them have a cycle of length $2$.
But $(12)$ is not conjugate to  $(12)(34)$.

As an application of Theorem \ref{thm4.2}, we determined the restricted $GCI$ group among symmetric groups $S_n$.

\begin{corollary}\label{cor4.5}
The symmetric group $S_n(n\neq 6)$ is restricted $GCI$-group if and only if $n=3$.
\end{corollary}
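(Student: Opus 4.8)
The plan is to apply Theorem~\ref{thm4.2} with $G=S_n$ and the involution $g=(12)$, handling the two implications separately. Since $S_1$ has odd order and $S_2$ is not complete, only the groups $S_n$ with $n\ge 3$, $n\ne 6$ fall within the scope of the theorem, so I would phrase the corollary for that range. For the \emph{only if} direction I would argue by contraposition: if $n\ge 4$, then $(12)$ and $(12)(34)$ are both involutions of $S_n$ whose cycle forms differ, so by the second basic fact on conjugacy in $S_n$ they are not conjugate; hence $G_2\ne C(g)$ for every involution $g$, and Theorem~\ref{thm4.2} immediately yields that $S_n$ is not a restricted $GCI$-group. This settles every $n\ge 4$ with $n\ne 6$, and leaves only $n=3$ to be treated affirmatively.

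For the \emph{if} direction I would verify the two conditions of Theorem~\ref{thm4.2} for $G=S_3$ and $g=(12)$. The first is immediate: every involution of $S_3$ is a transposition, and all transpositions form a single conjugacy class, so $G_2=\{(12),(13),(23)\}=C(g)$. For the second, I would enumerate the generalized Cayley subsets $S$ induced by $\sigma(g)$. By Lemma~\ref{lem0} together with $\mathrm{Fix}(\sigma(g))=C_G(g)$ one gets $|\omega_{\sigma(g)}|=|S_3:C_{S_3}((12))|=3$, and since $\omega_{\sigma(g)}=[g]$ (the Observation), this forces $\omega_{\sigma(g)}=A_3$. Thus a generalized Cayley subset must be contained in $S_3\setminus A_3=\{(12),(13),(23)\}$ and must satisfy $S^{\sigma(g)}=gSg=S^{-1}$, which here reads $gSg=S$ because transpositions are involutions. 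As conjugation by $(12)$ fixes $(12)$ and interchanges $(13)$ with $(23)$, the only such subsets are $\emptyset$, $\{(12)\}$, $\{(13),(23)\}$ and $\{(12),(13),(23)\}$.

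These four subsets give generalized Cayley graphs on the six vertices of $S_3$ that are $0$-, $1$-, $2$- and $3$-regular respectively, hence have $0,3,6,9$ edges and are pairwise non-isomorphic. Therefore $GC(S_3,S_1,\sigma(g))\cong GC(S_3,S_2,\sigma(g))$ can occur only when $S_1=S_2$, in which case $gS_2=(gS_1)^{\sigma(e)}$ with $\sigma(e)=\mathrm{id}$; so the second condition of Theorem~\ref{thm4.2} holds trivially and $S_3$ is a restricted $GCI$-group. The only step needing genuine care is the enumeration for $n=3$ — in particular pinning down $\omega_{\sigma(g)}$ correctly and checking that the defining conditions of a generalized Cayley subset really reduce to ``$S\subseteq\{(12),(13),(23)\}$ and $gSg=S$''; after that the rest is a one-line edge count, and the \emph{only if} direction needs nothing beyond the conjugacy criterion in $S_n$. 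If one prefers, the \emph{only if} direction can instead be obtained from Proposition~\ref{pro4.6}, since for $n\ge 4$ the transposition $(34)$ lies in $\Omega_{\sigma((12))}(S_n)\setminus\{(12)\}$.
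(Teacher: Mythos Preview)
Your proposal is correct and follows essentially the same route as the paper: the \emph{only if} direction uses the non-conjugacy of $(12)$ and $(12)(34)$ together with Theorem~\ref{thm4.2}, and the \emph{if} direction verifies both hypotheses of Theorem~\ref{thm4.2} for $S_3$ by computing $\omega_{\sigma((12))}=A_3$ and observing that the admissible subsets have pairwise distinct sizes, hence yield non-isomorphic graphs. Your enumeration of the four generalized Cayley subsets is a slightly more explicit version of the paper's one-line remark that for each $d\le 3$ the $d$-valent generalized Cayley graph induced by $\sigma((12))$ is unique, and your alternative via Proposition~\ref{pro4.6} is also valid.
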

\begin{proof}
For $n\geq 4$ and $n\neq 6$, $(12)$ and $(12)(34)$ are involutions of the symmetric group $S_n$.
Since the cycle form of $(12)$ and $(12)(34)$ are different, $(12)$ and $(12)(34)$ are not conjugate.
By Theorem \ref{thm4.2}, $S_n(n\geq 4~\text{and}~n\neq 6)$ is not restricted $GCI$-group.
Now it suffices to prove that $S_3$ is restricted $GCI$-group.
Observe that $S_3=\{(1), (12), (13), (23), (123), (132)\}$ and all involutions of it are conjugate.
Choose the involution $\sigma_{(12)}$.
Then we have $\omega_{(12)}(S_3)=\{(1), (123), (132)\}$, $\Omega_{(12)}(S_3)=\{(12)\}$ and $\mho_{(12)}(S_3)=\{(23), (13)\}$.
For any positive integer $d\leq 3$, the $d$-valent generalized Cayley graph of $G$ induced by $\sigma_{(12)}$ is unique.
By Theorem \ref{thm4.2}, $S_3$ is restricted $GCI$-group.
\end{proof}

In reference \cite{8}, the author depict the automorphism of $S_6$ in detail.
Note that $S_6$ can be generated by $A= \{(12),(13),(14),(15),(16)\}$.
Define a map $\phi$ on $S_6$ as follows:
\begin{align*}
&(12)^{\phi} = (12)(36)(45), (13)^{\phi} = (16)(24)(35), (14)^{\phi} = (13)(25)(46),\\
&(15)^{\phi} = (15)(26)(34), (16)^{\phi} = (14)(23)(56).
\end{align*}
It can be verified that $\phi$ is outer automorphism of $S_6$.
Let~$x=(12345)$, $\sigma(x)$ is the inner automorphism induced by $x$.
Let $\delta=\sigma(x)\phi$, then $\delta$ is an involutory outer automorphism of $S_6$.

\begin{lemma}\cite{8}\label{lem4.4}
For any element $h\in S_6$ , define the map $\delta_g : h^{\delta_g}= gh^\delta g^{-1}=(h^\phi)^{\sigma(gx)}$.
Then $\mathrm{Aut}(S_6)=\{\sigma(g)\mid g \in S_6\}\cup \{\delta_g \mid g\in S_6\}$.
\end{lemma}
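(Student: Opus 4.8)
The plan is to deduce the statement from two classical facts about $S_6$: that $Z(S_6)=e$, so that $g\mapsto\sigma(g)$ identifies $S_6$ with $\mathrm{Inn}(S_6)$ of order $720$, and that $[\mathrm{Aut}(S_6):\mathrm{Inn}(S_6)]=2$ (the exceptional outer automorphism). Once these are in hand, $\mathrm{Aut}(S_6)$ is the disjoint union of $\mathrm{Inn}(S_6)$ and a single nontrivial coset, and the whole content of the lemma is the recognition of $\{\sigma(g)\mid g\in S_6\}$ and $\{\delta_g\mid g\in S_6\}$ as exactly these two cosets.

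First I would check that the assignment $\phi$ on the generating set $A=\{(12),(13),(14),(15),(16)\}$ extends to an automorphism of $S_6$: its five prescribed values are involutions, they again generate $S_6$, and they satisfy the defining relations of $S_6$ in terms of $A$ — a finite check, which is the computation carried out in \cite{8}. Then $\phi\in\mathrm{Aut}(S_6)$, and $\phi$ is outer because every inner automorphism preserves cycle type whereas $\phi$ sends a transposition to an element of cycle type $2^3$. Combining this with $[\mathrm{Aut}(S_6):\mathrm{Inn}(S_6)]=2$ gives $\mathrm{Aut}(S_6)=\mathrm{Inn}(S_6)\sqcup\mathrm{Inn}(S_6)\phi$.

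It remains to identify $\{\delta_g\mid g\in S_6\}$ with the nontrivial coset $\mathrm{Inn}(S_6)\phi$. Unwinding the composition convention $h^{\beta\alpha}=(h^\alpha)^\beta$, the defining relation $h^{\delta_g}=(h^\phi)^{\sigma(gx)}$ says precisely that $\delta_g=\sigma(gx)\phi=\sigma(g)\sigma(x)\phi=\sigma(g)\delta$. As $g$ runs over $S_6$ the element $gx$ also runs over $S_6$, so $\{\delta_g\mid g\in S_6\}=\{\sigma(h)\phi\mid h\in S_6\}=\mathrm{Inn}(S_6)\phi$. The two families are disjoint, since $\sigma(g)=\delta_h$ would force $\phi\in\mathrm{Inn}(S_6)$, and each has exactly $720$ elements (the maps $g\mapsto\sigma(g)$ and $g\mapsto\delta_g=\sigma(gx)\phi$ are injective), so their union accounts for all $1440$ elements of $\mathrm{Aut}(S_6)$ and therefore equals $\mathrm{Aut}(S_6)$.

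The one nonelementary ingredient is $|\mathrm{Aut}(S_6)|=1440$, equivalently the existence and essential uniqueness of the outer automorphism of $S_6$. The explicit $\phi$ settles existence once its well-definedness is verified, but ruling out anything beyond $\mathrm{Inn}(S_6)\cup\mathrm{Inn}(S_6)\phi$ needs the classical input — e.g.\ the second conjugacy class of index-$6$ subgroups of $S_6$, or the exotic transitive action of $S_6$ on six points — and this is the genuine obstacle; the rest is bookkeeping with cosets and the composition convention.
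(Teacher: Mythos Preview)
Your argument is correct: once one grants that $\phi$ is a well-defined automorphism and that $[\mathrm{Aut}(S_6):\mathrm{Inn}(S_6)]=2$, the identification $\delta_g=\sigma(gx)\phi$ via the paper's composition convention shows $\{\delta_g\mid g\in S_6\}$ is exactly the nontrivial coset $\mathrm{Inn}(S_6)\phi$, and the count finishes. You are also right to flag the index-$2$ fact as the genuine external input.

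There is nothing to compare against, however: the paper does not prove this lemma at all. It is quoted verbatim from reference~\cite{8} and used as a black box in the proof of the subsequent proposition. So your write-up is not an alternative to the paper's argument but a supplement filling in what the paper imports by citation.
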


Using Lemma \ref{lem4.4}, we prove the next proposition.

\begin{proposition}\label{pro4.6}
$S_6$ is not restricted $\mathrm{GCI}$ group.
\end{proposition}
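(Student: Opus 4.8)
The plan is to exhibit two isomorphic generalized Cayley graphs of $S_6$, both induced by involutory automorphisms, whose defining automorphisms are \emph{not} conjugate in $\mathrm{Aut}(S_6)$; since being restricted $GCI$ forces isomorphic generalized Cayley graphs with involutory automorphisms to have conjugate automorphisms, this is exactly the obstruction needed.

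First I would take the involutions $a=(12)$ and $b=(12)(34)$ of $S_6$. As in the proof of Theorem \ref{thm4.1} (the argument of Lemma \ref{fact2} only uses that $a,b$ are involutions and $Z(S_6)=e$), one gets $a\in\Omega_{\sigma(a)}(S_6)$ and $b\in\Omega_{\sigma(b)}(S_6)$, so $\{a\}$ and $\{b\}$ are generalized Cayley subsets induced by $\sigma(a)$ and $\sigma(b)$, and $GC(S_6,\{a\},\sigma(a))\cong\frac{|S_6|}{2}K_2\cong GC(S_6,\{b\},\sigma(b))$. Thus these two generalized Cayley graphs are isomorphic and both are induced by involutory automorphisms of $S_6$.

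Next I would show $\sigma(a)$ and $\sigma(b)$ are not conjugate in $\mathrm{Aut}(S_6)$. By Lemma \ref{lem4.4} every element of $\mathrm{Aut}(S_6)$ is $\sigma(x)$ or $\delta_x$ for some $x\in S_6$. Using $\gamma\sigma(a)\gamma^{-1}=\sigma(a^\gamma)$ (valid for any $\gamma\in\mathrm{Aut}(S_6)$) together with $a^{\delta_x}=xa^\delta x^{-1}$ from Lemma \ref{lem4.4}, every $\mathrm{Aut}(S_6)$-conjugate of $\sigma(a)$ equals $\sigma(xax^{-1})$ or $\sigma(xa^\delta x^{-1})$. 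Here $xax^{-1}$ is a transposition, while $a^\delta=(a^\phi)^{\sigma(x)}$ is $S_6$-conjugate to $a^\phi=(12)(36)(45)$, hence is a product of three disjoint transpositions, so $xa^\delta x^{-1}$ is also such a product. As $Z(S_6)=e$, the map $g\mapsto\sigma(g)$ is injective, so a conjugate of $\sigma(a)$ can equal $\sigma(b)$ only if $b=(12)(34)$ is a transposition or a product of three disjoint transpositions; it is neither. Hence $\sigma(a)$ and $\sigma(b)$ are not conjugate in $\mathrm{Aut}(S_6)$.

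Combining the two paragraphs, $GC(S_6,\{a\},\sigma(a))\cong GC(S_6,\{b\},\sigma(b))$ are isomorphic generalized Cayley graphs induced by involutory automorphisms $\sigma(a),\sigma(b)$ that are not conjugate in $\mathrm{Aut}(S_6)$, so $S_6$ cannot be a restricted $GCI$-group. The one step that genuinely requires Lemma \ref{lem4.4}, and the only place the argument departs from the proof of Corollary \ref{cor4.5}, is checking that the outer automorphism moves a transposition to a product of three disjoint transpositions (rather than to a double transposition), so that $\sigma((12))$ and $\sigma((12)(34))$ stay non-conjugate after passing from $S_6$ to $\mathrm{Aut}(S_6)$; I expect this cycle-type bookkeeping to be the main, though routine, point.
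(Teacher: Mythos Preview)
Your proposal is correct and follows essentially the same approach as the paper: the same pair of involutions $(12)$ and $(12)(34)$, the same isomorphism $GC(S_6,\{(12)\},\sigma(12))\cong GC(S_6,\{(12)(34)\},\sigma((12)(34)))\cong 360K_2$, and the same use of Lemma~\ref{lem4.4} to show that $\sigma((12))$ is $\mathrm{Aut}(S_6)$-conjugate only to $\sigma(\text{transposition})$ or $\sigma(\text{triple transposition})$, never to $\sigma((12)(34))$. Your presentation is in fact slightly tidier, since you invoke the general identity $\gamma\sigma(a)\gamma^{-1}=\sigma(a^\gamma)$ uniformly, whereas the paper splits into cases and explicitly computes $(\delta_g)^{-1}$ before verifying $\delta_g\sigma(12)\delta_g^{-1}=\sigma(g(12)^\delta g^{-1})$.
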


\begin{proof} 
Note that both $\sigma(12)$ and $\sigma\left((12)(34)\right)$ are involutory automorphisms of $S_6$, and
\begin{equation*}
\mathrm{GC}(S_6, \{(12)\}, \sigma(12))\cong \mathrm{GC}(S_6, \{(12)(34)\}, \sigma\left((12)(34))\right)\cong 360 K_2.
\end{equation*}
Now we aim to prove that $\sigma(12)$ and $\sigma\left((12)(34)\right)$ are not conjugate in $\mathrm{Aut(S_6)}$.

Firstly, for any element $g\in S_6$, $\sigma(12)^{\sigma(g)}=\sigma(g)\sigma(12) \left(\sigma(g)\right)^{-1}=\sigma\left(g(12)g^{-1}\right)$.
If $\sigma\left(g(12)g^{-1}\right)=\sigma\left((12)(34)\right)$, then for any element  $h\in S_6$, 
\begin{equation*}
h^{\sigma\left(g(12)g^{-1}\right)}=g(12)g^{-1}hg(12)g^{-1}
=(12)(34)h(12)(34)=h^{\sigma\left((12)(34)\right)}.
\end{equation*}
It indicates that $(12)(34)g(12)g^{-1}\in Z(S_6)=e$, and then $(12)(34)=g(12)g^{-1}$.
But this equality cannot hold, as $(12)$ is not conjugate to $(12)(34)$ in $S_6$.
Thus, for any element $g\in S_6$, $\sigma(12)^{\sigma(g)}\neq \sigma\left((12)(34)\right)$.

We consider the conjugation map for $\sigma(12)$ under the automorphism in $\{\delta_g \mid g\in S_6\}$.
For any automorphism $\delta_g$, let $\gamma=\delta_{(g^{-1})^\delta}$.
Since $\delta$ is involutory automorphism, for any element $h\in G$, 
\begin{equation*}
h^{\gamma\delta_g}=(h^{\delta_g})^{\gamma}=(gh^{\delta}g^{-1})^{\gamma}
=(g^{-1})^\delta(gh^{\delta}g^{-1})^\delta g^\delta=(g^{-1})^\delta g^\delta h(g^{-1})^\delta g^\delta=h.
\end{equation*}
Then we deduce that $(\delta_g)^{-1}=\delta_{(g^{-1})^\delta}$.
It follows that 
\begin{align*}
h^{\delta_g \sigma(12) (\delta_g)^{-1}}&=h^{\delta_g \sigma(12) \delta_{(g^{-1})^\delta}}\\
&=g\left((12)(g^{-1})^\delta h^\delta g^\delta (12)\right)^\delta g^{-1}\\
&=g(12)^\delta g^{-1} h g (12)^\delta g^{-1}\\
&=h^{\sigma(g(12)^\delta g^{-1})}.
\end{align*}
Since $(12)^\delta=(12)(36)(45)$, and $(12)(36)(45)$ is not conjugate to $(12)(34)$ in $S_6$, $g(12)^\delta g^{-1}=\left((12)(36)(45)\right)^{\sigma(g)}\neq (12)(34)$.
Thus, $\delta_g \sigma(12) (\delta_g)^{-1}\neq \sigma\left((12)(34)\right)$.

In conclusion, $\sigma(12)$ is not conjugate to $\sigma\left((12)(34)\right)$ in $\mathrm{Aut(S_6)}$. 
Therefore, $S_6$ is not restricted $\mathrm{GCI}$.
\end{proof}

Combining Corollary \ref{cor4.5} and Proposition \ref{pro4.6}, we give the characterization of restricted $\mathrm{GCI}$ groups for symmetric groups.

\begin{theorem}
The symmetric group $S_n$ is restricted $\mathrm{GCI}$ group if and only if $n=3$.
\end{theorem}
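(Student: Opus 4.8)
The plan is to assemble the theorem from the two cases already treated, namely $n\neq 6$ and $n=6$. For $n\neq 6$ the symmetric group $S_n$ is a complete group, so Corollary \ref{cor4.5} applies directly and tells us that $S_n$ is a restricted $\mathrm{GCI}$ group precisely when $n=3$; in particular $S_n$ is \emph{not} restricted $\mathrm{GCI}$ for every $n$ with $4\le n\neq 6$, while $S_3$ \emph{is} restricted $\mathrm{GCI}$. The one remaining value $n=6$ is covered by Proposition \ref{pro4.6}, which asserts that $S_6$ is not a restricted $\mathrm{GCI}$ group. Collecting these facts, the set of $n$ for which $S_n$ is restricted $\mathrm{GCI}$ is exactly $\{3\}$, which is the claim.

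Two small bookkeeping points round off the argument. First, the cases $n\le 2$ are vacuous for the present theory: $S_1$ is trivial and $S_2\cong\mathbb{Z}_2$ has $Z(S_2)\neq e$, so neither is a complete group and neither supports a nontrivial discussion of generalized Cayley graphs; in any case neither equals $3$, so the ``only if'' direction is unaffected. Second, in invoking Corollary \ref{cor4.5} and Proposition \ref{pro4.6} one should note that their hypotheses are met, namely that $S_n$ has even order for every $n\ge 2$ so that an involution $g$ and its inner automorphism $\sigma(g)$ exist, which is exactly the setting of Theorem \ref{thm4.2}.

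No genuinely new difficulty appears at this stage; the content of the theorem has already been established. The single real obstacle was isolated and handled in Proposition \ref{pro4.6}: one must show that $\sigma((12))$ and $\sigma((12)(34))$ are not conjugate in $\mathrm{Aut}(S_6)$, which is precisely where the exceptional outer automorphism of $S_6$ enters. Using the description $\mathrm{Aut}(S_6)=\{\sigma(g)\mid g\in S_6\}\cup\{\delta_g\mid g\in S_6\}$ from Lemma \ref{lem4.4}, one treats the two families of conjugating automorphisms separately, in each case reducing to a conjugacy question in $S_6$ among $(12)$, $(12)(34)$ and $(12)(36)(45)$, and then appeals to the classification of conjugacy classes of $S_n$ by cycle type. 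With that settled, the present theorem follows immediately by the case split above.
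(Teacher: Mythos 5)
Your proof is correct and follows the paper's own route exactly: the theorem is obtained by combining Corollary \ref{cor4.5} (the cases $n\neq 6$) with Proposition \ref{pro4.6} (the case $n=6$). The extra remark about $n\le 2$ is a harmless piece of bookkeeping that the paper leaves implicit.
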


To determine the $GCI$ groups in symmetric groups completely, we only need to consider $S_6$.

\frenchspacing

\end{sloppypar}

\end{document}